\title{Error estimates for Stokes problem with Tresca friction condition}
\newtheorem{theorem}{Theorem}[section]
    \newtheorem{lemma}[theorem]{Lemma}
    \newtheorem{proposition}[theorem]{Proposition}
\newenvironment{proof}{\begin{trivlist}
                       \item[]\hspace{0cm}{\bf Proof : }
                       \hspace{0cm} }{\hfill $\square$
                     \end{trivlist}}
    \newcommand{\qed}{\nobreak \ifvmode \relax \else
          \ifdim\lastskip<1.5em \hskip-\lastskip
          \hskip1.5em plus0em minus0.5em \fi \nobreak
          \vrule height0.75em width0.5em depth0.25em\fi}
\begin{document}
 \thispagestyle{empty}

\vspace*{3cm}
\centerline{\huge Error estimates for Stokes problem with Tresca}

\vspace{0.5cm}
\centerline{\huge friction condition}


%
%

\vspace*{5cm}
\begin{center}

{\bf Mekki AYADI}$^1$  {\bf Mohamed Khaled GDOURA}$^{1,2}$ {\bf Taoufik SASSI}$^2$ 
\end{center}
{\footnotesize
\noindent$^1$ Laboratoire de Mod\'elisation Math\'ematiques et Num\'erique dans les Sciences de l'Ing\'enieur, Ecole Nationale d'Ing\'enieurs de Tunis, B.P. 32, 1002 Tunis, Tunisie.

\noindent$^2$ Laboratoire de math\'ematiques Nicolas Oresme, CNRS UMR 6139, Universit\'e de Caen, UFR sciences Campus II, Bd Mar\'echal JUIN, 14032 Caen cedex, France.

\noindent Emails: mekki.ayadi@enis.rnu.tn, mohamedkhaled.gdoura@lamsin.rnu.tn (corresponding author), taoufik.sassi@unicaen.fr.
%

\vspace{0.25cm}
%
\section*{R\'esum\'e}
Dans ce travail on a propos\'e et \'etudi\'e une formulation mixte \`a trois champs pour r\'esoudre le probl\`eme de Stokes avec des conditions aux limites non-lin\'eaires, du type Tresca. Deux multiplicateurs de Lagrange ont \'et\'e utilis\'es afin d'imposer $div(u)=0$ et de r\'egulariser la fonctionnelle \'energie. Les \'el\'ements finis {\it P1 bulle/P1-P1} ont permis de discr\'etiser le probl\`eme r\'esultant. Des estimations d'erreurs ont \'et\'e d\'eriv\'ees et plusieurs tests num\'eriques sont r\'ealis\'es.

\vspace{0.25cm}
\noindent{\it {\bf Mots cl\'es}: Probl\`eme de Stokes, Frottement de Tresca, in\'equation variationnelle, \'elements finis mixtes, estimation d'erreur.}
%
\section*{Abstract}
In this work we propose and study a three field mixed formulation for solving the Stokes problem with Tresca-type non-linear boundary conditions. Two Lagrange multipliers are used to enforce $div(u)=0$ constraint and to regularize the energy functional. The resulting problem is discretised using {\it P1 bubble/P1-P1} finite elements. Error estimates are derived and several numerical studies are achieved.

\vspace{0.25cm}
\noindent{\it {\bf Key words}: Stokes problem, Tresca friction, variational inequality, mixed finite element, error estimates.}
\newpage
%
\section*{Notations}
We need to set some notations and recall some functional tools necessary for our analysis. Let $\Omega\subset \mathbb{R}^d$, $d\ge2$, an open set with boundary $\partial \Omega$ wich is the union of  two nonoverlapping portions $\Gamma_0 $ and $\Gamma $ (may be empty).

\noindent The euclidian norm of a point ${\bf x}\in \mathbb{R}^d$ is denoted by $|{\bf x}|$ in what follows.
The Lebesgue space $L^2(\Omega)$ is endowed with the norm:
$$
\displaystyle \forall p\in L^2(\Omega) \qquad ||p||_0= \left( \int_\Omega|p({\bf x})|^2\, d{\bf x}\right)^\frac{1}{2},
$$
while $L^2_0(\Omega)$ is the closed subspace of $L^2(\Omega)$ defined by:
$$
\displaystyle L^2_0(\Omega)=\left\lbrace  p\in  L^2(\Omega) \mbox{ such that }  \int_\Omega p({\bf x})\, d{\bf x} =0\right\rbrace.
$$

\noindent We make constant use of the standard Soblev space $H^m(\Omega)$, $m\ge1$, provided with the norm:
$$
\displaystyle||\psi||_m	=\left( \sum\limits_{0\leq |\alpha| \leq m}||\partial^\alpha \psi||^2_0\right) ^{1/2},
$$
where $\alpha$ is a multi-index. Fractional Sobolev spaces $H^\nu(\Omega)$, $\nu\in \mathbb{R}_+\backslash \mathbb{N}$ are defined by
$$
\displaystyle H^\nu(\Omega)= \left\lbrace \varphi \in H^m(\Omega) \mbox{ such that } ||\varphi||_{\nu,\Omega}<+\infty \right\rbrace,
$$
with
$$
\displaystyle ||\varphi||_{\nu ,\Omega}=\left( ||\varphi||^2_{m}+\sum\limits_{|\alpha|=m}\int_\Omega\int_\Omega \frac{(\partial^{\alpha}\varphi(\textbf{x})-\partial^{\alpha}\varphi(\textbf{y}))^{2}}{|\textbf{x}-\textbf{y}|^{2+2\theta}}\, d\textbf{x}\, d\textbf{y}\right)
$$
with $m$ being the integr part of $\nu$ and $\theta$ its decimal parts.

\noindent The closure in $H^\nu(\Omega)$ of $D(\Omega)$, the space of infinitely differentiable functions with support in $\Omega$, is denoted by $H^\nu_0(\Omega)$. On any portion $\Gamma \subseteq \partial \Omega$ we introduce the space $H^\frac{1}{2}(\Gamma)$ as follows
$$
\displaystyle H^{\frac{1}{2}}(\Gamma)=\left\lbrace \varphi \in L^2(\Gamma)\mbox{ such that } ||\varphi||_{\frac{1}{2},\Gamma}<+\infty\right\rbrace,
$$
where
$$
\displaystyle ||\psi||_{\frac{1}{2},\Gamma}=\left( ||\psi||_{0,\Gamma}+\int_\Gamma\int_\Gamma \frac{(\psi(\textbf{x})-\psi(\textbf{y}))^2}{|\textbf{x}-\textbf{y}|^2}d\Gamma_\textbf{x} d\Gamma_\textbf{y}\right)^\frac{1}{2}.
$$

The space $H^{-\frac{1}{2}}(\Gamma)$ is the dual space of $H^{\frac{1}{2}}(\Gamma)$, $\left\langle \cdot ,\cdot\right\rangle $ stands for the duality pairing and 
$$
\displaystyle ||\mu||_{-\frac{1}{2},\Gamma}=\sup\limits_{\varphi\in {H}^{\frac{1}{2}}(\Gamma), \varphi\neq 0}\frac{\left\langle \mu,\varphi\right\rangle }{||\varphi||_{\frac{1}{2},\Gamma}}.
$$

The special space ${H}_{00}^{\frac{1}{2}}(\Gamma)$ is defined as the set of the restriction to $\Gamma$ of the functions of $H^{\frac{1}{2}}(\partial \Omega)$ that vanish on $\partial \Omega\backslash \Gamma$ and its dual space is denoted by $({H}^{\frac{1}{2}}_{00}(\Gamma))'$.

The cartesian product of $k$ previous spaces and their elements are denoted by bold caracter. The respective norms are introduced as follows:
\begin{eqnarray*}
\begin{array}{ll}
 \displaystyle ||{\bf v}||_m	&=\displaystyle\left( \sum\limits_{i=0}^k||v_i||^2_k\right) ^{1/2} \quad {\bf v}=(v_1,\cdots,v_m)\in{\bf H}^1(\Omega),\\\\
\displaystyle ||{\bf w}||_{\frac{1}{2},\Gamma}&=\displaystyle\left( \sum\limits_{i=0}^k||{\bf w}_i||^2_{\frac{1}{2} ,\Gamma}\right) ^{1/2} \quad {\bf {\bf w}}=(w_1,\cdots,w_m)\in{\bf H}^\frac{1}{2}(\Gamma),\\\\
\displaystyle ||{\bf \mu}||_{-\frac{1}{2},\Gamma}&=\displaystyle\left( \sum\limits_{i=0}^k||{\bf \mu}_i||^2_{-\frac{1}{2} ,\Gamma}\right) ^{1/2} \quad {\bf {\bf \mu}}=(\mu_1,\cdots,\mu_m)\in{\bf H}^{-\frac{1}{2}}(\Gamma),\\\\
\end{array}
\end{eqnarray*}

Let $\mathcal{X}\subset H^1(\Omega)$  be a subspace of functions vanishing on a non-empty portion $\Gamma_0$ open in $\partial \Omega$
$$
\mathcal{X}=\left\lbrace v\in H^1(\Omega) \mbox{ such that } v_{|_{\Gamma_0}}=0\right\rbrace .
$$
We introduce the enrgetic norm $|||\cdot |||_1$ in $\mathcal{X}$ corresponding to the scalar product
$$
\displaystyle\left(\textbf{u}, \textbf{v}\right)_1=\int_\Omega\sum\limits_{i,j=1}^3 \varepsilon_{ij}(\textbf{u})\varepsilon_{ij}(\textbf{v})d(\textbf{x}),
$$
where $\varepsilon_{ij}$ is the $ij$-th component of the linearized strain rate tensor $\displaystyle \varepsilon({\bf u})=\frac{1}{2}(\nabla {\bf u}+\nabla^t {\bf u})$. From the Korn inequality it follows that $||\cdot||_1$ and $|||\cdot |||_1$ are equivalent in $\mathcal{X}$.

We denote by ${\bf n}$ the outward unit normal to $\partial \Omega$ and ${\bf u}_n$, respectively ${\bf u}_t$, the normal , respectively the tangential, component of ${\bf u}$.

The stress vector $\sigma$ is equal to $\underline{\underline{\sigma}}.{\bf n}$ where $\underline{\underline{\sigma}}$ is the Cauchy stress tensor defined by:
$$
\underline{\underline{\sigma}}=2\nu\varepsilon({\bf u})-p \underline{\underline{\delta}},
$$
where $p$ is the hydrostatic pressure, $\underline{\underline{\delta}}$ is the identity tensor and $\nu$ is the kinematic fluid viscosity.
\section{Introduction}
No-slip hypothesis at fluid-wall interface leads to good agreement with experimental observations for newtonian fluids which is no longer true for non-newtonian fluid \cite{magnin87}. For example, in the flow of certain high molecular weight linear polymers through circular dies, the exit flow rate has been found to be a discontinous function of pressure drop over a certain range of shear rates \cite{tordell86,hatzik91}. This obervation is consistant with the hypothesis that the velocity at the wall is not zero. Several studies have been made and showed not only that slip takes place when a threshold is reached \cite{hatzik93} but also it's the origin of many defects and instabilities in the polymer injection process \cite{santana05,goutille}.

The first attempt to integrate this boundary condition in a numerical simulation of a flow is due to Doltsini {\it et al.} \cite{Doltsini87} and Fortin \cite{fortin91}. Since that, many papers were published simulating various flows with such boundary conditions (see \cite{rao99} and refrences therin). Recently, based on the penality method, error estimates for the Stokes problem with Tresca boundary conditions with strong regularity assumption on the velocity field are obtained \cite{kaitai08}.

The aim of this work is to contribute to the numerical analysis of Stokes problem with Tresca boundary conditions. Our first purose is to carry out the convergence analysis and a priori estimates for the mixed finite element formulation of the above cited problem. The second one is to derive an algorithm well adapted to this formulation and easy to implement in order to validate our theoritical estimates. 

The paper is organized as follows. First, we introduice the equations modelling the Stokes problem. Then we establish the continous mixed variational formulation is section 3. The following section is devoted to a priori error estimates , we show an optimal order of $h^{3/4}$ with ${\bf H}^2(\Omega)$ assupmtion regularity on the velocity. In section 5 we propose an algorithm based on augmented lagrangian method to solve the 2D problem and make some numerical tests.
\section{Setting Stokes problem with nonlinear boundary conditions}
We consider the following Stokes problem with nonlinear boundary condition of Tresca friction type:
\begin{eqnarray}
\label{Stok_tres_cont}
\left\{
\begin{array}{rcllrr}
-div(\nu\varepsilon( {\bf u})) + \nabla p	&=&\textbf{f}			& \mbox{in }\Omega\\
div({\bf u})					&=&0 				& \mbox{in }\Omega \\
{\bf u}						&=&\textbf{0}			& \mbox{on }\Gamma_0\\
{\bf u}_n					&=&\textbf{0}			& \mbox{on }\Gamma\\
|\sigma_t|< g \Rightarrow  \quad {\bf u}_t	&=&\textbf{0}			&\mbox{on }\Gamma\\
|\sigma_t|= g \Rightarrow \exists k>0 \mbox{ a constant such that }\quad {\bf u}_t	&=&-k \sigma_t			&\mbox{on }\Gamma
\end{array}
\right.
\end{eqnarray}
with $\Omega \subset \mathbb{R}^d$ ($d=2$ or $3$) an open set with regular boundary $\partial \Omega$, which is the union of two nonoverlapping portions $\Gamma_0 $ and $\Gamma $. $\Gamma_0 $ is subjected to no-slip boundary condition while $\Gamma$ is where le fluid may slip.
We need this result to derive the variational problem.
%
\begin{proposition}\cite{saidi04}
\begin{equation}
\label{equiv_tresca}
\left\{
\begin{array}{rcllrr}
|\sigma_t|<g \, \Rightarrow {\bf u}_t					&=&0\\
|\sigma_t|=g \, \Rightarrow {\bf u}_t					&=&-k \sigma_t \quad \mbox{ k is a non-negative constant on }\Gamma
\end{array}
\right.
\qquad \Longleftrightarrow \qquad 
\sigma_t.{\bf u}_t + g|{\bf u}_t|=0 \mbox{ on }\Gamma
\end{equation}
\end{proposition}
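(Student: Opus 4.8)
The plan is to prove the two implications separately, relying throughout on the constraint $|\sigma_t|\le g$ that implicitly accompanies the Tresca conditions (the tangential stress can never exceed the threshold $g$), together with the Cauchy--Schwarz inequality. The single elementary fact driving the argument is that for any two vectors $\sigma_t\cdot u_t\ge -|\sigma_t|\,|u_t|$, with equality exactly when the vectors are anti-parallel.

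First I would dispatch the forward implication ($\Rightarrow$) by a direct case analysis on $\Gamma$. Where $|\sigma_t|<g$, the hypothesis forces $u_t=0$, so both $\sigma_t\cdot u_t$ and $g|u_t|$ vanish and the sum is zero. Where $|\sigma_t|=g$, one has $u_t=-k\sigma_t$ with $k\ge 0$, and a one-line computation gives $\sigma_t\cdot u_t=-k|\sigma_t|^{2}=-kg^{2}$ while $g|u_t|=gk|\sigma_t|=kg^{2}$, so the sum cancels again. This direction is purely computational.

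The substantive direction is the converse ($\Leftarrow$). Assuming $\sigma_t\cdot u_t+g|u_t|=0$, I rewrite it as $\sigma_t\cdot u_t=-g|u_t|$ and insert it into the chain $\sigma_t\cdot u_t\ge -|\sigma_t|\,|u_t|\ge -g|u_t|$, the last step using $|\sigma_t|\le g$. The hypothesis pins both inequalities to equalities. I then split according to $u_t$: on the set where $u_t\neq 0$, equality in the first inequality forces $\sigma_t$ and $u_t$ to be anti-parallel while equality in the second forces $|\sigma_t|=g$, and together these recover $u_t=-k\sigma_t$ with a strictly positive $k$; in particular $|\sigma_t|<g$ can occur only where $u_t=0$. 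On the complementary set where $u_t=0$, both target conditions hold trivially, the subcase $|\sigma_t|=g$ being absorbed by the admissible choice $k=0$.

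The place where care is genuinely required is this equality-case analysis in the converse. One must treat the degenerate locus $u_t=0$ as a separate measurable subset of $\Gamma$ and exploit that $k$ is permitted to be merely non-negative (not strictly positive) in order to cover the subcase $|\sigma_t|=g$, $u_t=0$. I would also stress that the constraint $|\sigma_t|\le g$ is indispensable: without it the scalar identity $\sigma_t\cdot u_t+g|u_t|=0$ admits spurious solutions with $|\sigma_t|>g$ and $u_t$ not parallel to $-\sigma_t$, which do not correspond to the Tresca conditions. Finally, since all these relations hold pointwise almost everywhere on $\Gamma$, the case distinctions should be understood as partitioning $\Gamma$ into measurable subsets rather than as statements at an isolated point.
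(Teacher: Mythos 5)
The paper never proves this proposition: it is stated with a citation to \cite{saidi04} and used as a black box, so there is no in-house argument to compare yours against. Your proof is the standard one and is correct in substance. The forward direction is the two-line computation you give, and the converse is exactly the equality-case analysis in the chain $\sigma_t\cdot {\bf u}_t \ge -|\sigma_t|\,|{\bf u}_t| \ge -g|{\bf u}_t|$, which does require the side constraint $|\sigma_t|\le g$ that you rightly make explicit --- without it the equivalence is false, since the two implications on the left are vacuous wherever $|\sigma_t|>g$. Two caveats are worth recording. First, your converse tacitly assumes $\sigma_t\neq 0$ on the set where ${\bf u}_t\neq 0$: equality in the second inequality gives $|\sigma_t|=g$ there, so if $g$ vanishes on part of $\Gamma$ the representation ${\bf u}_t=-k\sigma_t$ breaks down (one gets $\sigma_t=0$ with ${\bf u}_t$ arbitrary, which the left-hand conditions cannot express); the statement is really meant for $g>0$ a.e.\ on the slip zone, whereas the paper only assumes $g\ge 0$ in $L^2(\Gamma)$. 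Second, the $k$ your argument produces is $k=|{\bf u}_t|/g$, a non-negative measurable \emph{function} on $\Gamma$, not the literal ``constant'' of the statement (and of system (\ref{Stok_tres_cont}), which even demands $k>0$); your pointwise, measurable-partition reading is the correct one, but you should say explicitly that the word ``constant'' cannot be taken at face value in the converse direction.
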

One can derive the variational formulation of (\ref{Stok_tres_cont}):
\begin{eqnarray}
\label{ineq_var_stokes_gen_sigm}
\left\{
\begin{array}{l}
\mbox{Find } {\bf u} \in {\bf V}_{div}(\Omega) \mbox{ such that :}\, \forall \, {\bf v}\in {\bf V}_{div}(\Omega)\\\\
\displaystyle a({\bf u},{\bf v}-{\bf u})+j({\bf v})-j({\bf u})\geq L({\bf v}-{\bf u}),
\end{array}
\right.
\end{eqnarray}
with 
$$
 {\bf V}(\Omega)=\{ {\bf v}\in {\bf H}^1(\Omega) ,\; {\bf v}_{|\Gamma_0}=0 ,\\ {\bf v}.{\bf n}_{|\Gamma}=0\} \qquad\quad {\bf V}_{div}(\Omega)=\{ {\bf v}\in {\bf V}(\Omega)\ ,\; div({\bf v})=0 \mbox{ in }\Omega\}
$$
$$\displaystyle a({\bf u},{\bf v})= \int_{\Omega}\nu \varepsilon({\bf u})\colon \varepsilon({\bf v})\, d\Omega \qquad \displaystyle L({\bf v})=\int_{\Omega} {\bf f}\, {\bf v}\, d\Omega \qquad g \mbox{ a non-negative function in } L^2(\Gamma)$$
$$\qquad \mbox{ and } \qquad\displaystyle j({\bf v})=\int_{\Gamma}g|{\bf v}_t| \,d\Gamma \qquad \forall{\bf u}, {\bf v} \in {\bf V}_{div}(\Omega).$$

\noindent Problem (\ref{ineq_var_stokes_gen_sigm}) is an elliptic variationnal inequality of the second kind which has a unique solution \cite{glow}. Moreover, since the bilinear form $a(\cdot,\cdot)$ is symmetic (\ref{ineq_var_stokes_gen_sigm}) is equivalent to the following constrained non-differentiable minimization problem:

\begin{eqnarray}
\label{pb_minimi}
\left\{
\begin{array}{l}
\mbox{Find } {\bf u} \in {\bf V}_{div}(\Omega) \mbox{ such that :}\\\\
\displaystyle \mathcal{J}({\bf u}) \leq \mathcal{J}({\bf v}) \quad \forall \, {\bf v}\in {\bf V}_{div}(\Omega),
\end{array}
\right.
\end{eqnarray}
where $\displaystyle \mathcal{J}({\bf v})=\frac{1}{2} \, a({\bf v}, {\bf v}) + j({\bf v}) -L({\bf v})$.

\section{Mixed Formulation}
\noindent In order to solve (\ref{pb_minimi}) a Lagrange multiplier $q$ is needed to enforce the condition $div({\bf u})=0$ in $\Omega$, which can be identified with the pressure. In the other hand Fujita proved in \cite{fujita_coh} that (\ref{ineq_var_stokes_gen_sigm}) is equivalent to 

\begin{eqnarray}
\label{ineq_var_sigma}
\left\{
\begin{array}{l}
\exists \, \sigma_t \in ({\bf H}^{\frac{1}{2}}_{00}(\Gamma))' \mbox{ such that }|\sigma_t|\leq g \mbox{ on }\Gamma\\
\displaystyle \int_{\Gamma} \sigma_t ({\bf v}-{\bf u})_t + j({\bf v})- j({\bf u}) \geq 0 \quad \forall \, {\bf v} \in {\bf V}(\Omega).
\end{array}
\right.
\end{eqnarray}
$\sigma_t$ is seen as a Lagrange multiplier and can be identified with the shear stress on $\Gamma$. The minimization problem (\ref{pb_minimi}) is equivalent to the following saddle-point formulation :
%
\begin{eqnarray}
\label{pt_selle}
\left\{
\begin{array}{l}
\mbox{Find }({\bf u},p, \lambda) \in \mathcal{H} \quad \mbox{such that:}\\\\
\mathcal{L}({\bf u},q,\mu) \leq \mathcal{L}({\bf u},p,\lambda) \leq \mathcal{L}({\bf v},p,\lambda)\quad
\forall ({\bf v},q,\mu)\in \mathcal{H}. 
\end{array}
\right.
\end{eqnarray}
%

\begin{eqnarray*}
\label{lagrang}
\begin{array}{l}
\displaystyle \mathcal{L}({\bf v},q,\mu)=\frac{1}{2} \, a({\bf v},{\bf v})-\int_{\Omega}q \,  div({\bf v}) + \int_{\Gamma}\mu \, {\bf v}_t -L({\bf v}) \quad
\forall ({\bf v},q,\mu)\in \mathcal{H}={\bf V}\times L^2_0(\Omega)\times \mathcal{Q}
\end{array}
\end{eqnarray*}
where 
\begin{eqnarray*}
\label{covx}
\begin{array}{rl}
\displaystyle \mathcal{Q}	&=	\left\lbrace \displaystyle\mu\in (L^2(\Gamma))^{d-1}, \, |\mu|\leq g \right\rbrace \\\\
\displaystyle			&=	\left\lbrace \displaystyle\mu\in (L^2(\Gamma))^{d-1}, \,\int_{\Gamma}\mu {\bf v}_t - \int_{\Gamma} g |{\bf v}_t|\leq 0\, \forall {\bf v}\in {\bf V}\right\rbrace .
\end{array}
\end{eqnarray*}

\noindent According to \cite{kikuchi88} problem (\ref{pt_selle}) has a uniqe solution charcterized by
%
%
\begin{eqnarray}
\label{pb_mix_3cham_vect}
\left\{
\begin{array}{rcll}
\mbox{Find }({\bf u},(p, \lambda)) \in {\bf V}\times \Lambda \quad \mbox{such that:}\\\\
a({\bf u},{\bf v})+ b((p,\lambda),{\bf v})	&=&	L({\bf v})	& \forall {\bf v}\in {\bf V}\\\\
\displaystyle 		b((q-p,\mu-\lambda),{\bf u})	&\leq&	0		& \forall (q,\mu) \in \Lambda,
\end{array}
\right.
\end{eqnarray}
where
\begin{equation}
b((p,\lambda),{\bf v})=-(p,div {\bf v}) + \langle \lambda, {\bf v}_{t}\rangle
\end{equation}
and $\Lambda=L^2_0(\Omega)\times \mathcal{Q}$ is a closed convex of $\mathcal{M}=L^2_0(\Omega)\times (L^2(\Gamma))^{d-1}$.
%

\begin{lemma}
 There exists a constant $ \alpha >0$ such that : $\forall (q,\mu)\in \mathcal{M}$
\begin{equation}
\label{infsup_cont}
\displaystyle \sup\limits_{\bf v\in V}\frac{b((q,\mu),{\bf v})}{||{\bf v}||_{1,\Omega} }\geq\alpha\left( ||q||+||\mu||_{-\frac{1}{2}} \right).
\end{equation}
\end{lemma}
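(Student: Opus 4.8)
The plan is to prove this combined inf-sup condition by controlling the two multipliers $q\in L^2_0(\Omega)$ (the pressure) and $\mu\in (L^2(\Gamma))^{d-1}$ (the shear stress) through separate test functions and then superposing them with a suitable weight. Two classical ingredients are needed. The first is the Stokes (Babu\v{s}ka--Brezzi) inf-sup condition: the divergence operator maps ${\bf H}^1_0(\Omega)$ onto $L^2_0(\Omega)$, so for every $q\in L^2_0(\Omega)$ there is ${\bf v}_1\in {\bf H}^1_0(\Omega)\subset {\bf V}$ with $\mathrm{div}\,{\bf v}_1=-q$ and $\|{\bf v}_1\|_1\le C_1\|q\|_0$. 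The second is the surjectivity of the tangential trace operator ${\bf v}\mapsto {\bf v}_t$ from ${\bf V}$ onto $({\bf H}^{1/2}_{00}(\Gamma))^{d-1}$, together with the dual-norm characterization: for each $\mu$ one can pick a tangential datum ${\bf w}$ with $\|{\bf w}\|_{1/2,\Gamma}=\|\mu\|_{-1/2}$ and $\langle\mu,{\bf w}\rangle\ge \tfrac12\|\mu\|_{-1/2}^2$, then lift it to ${\bf v}_2\in {\bf V}$ with $({\bf v}_2)_t={\bf w}$, $({\bf v}_2)_n=0$ on $\Gamma$, ${\bf v}_2=0$ on $\Gamma_0$, and $\|{\bf v}_2\|_1\le C_2\|\mu\|_{-1/2}$.

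The decisive structural observation is that ${\bf v}_1$ can be taken in ${\bf H}^1_0(\Omega)$, hence its full trace, in particular $({\bf v}_1)_t$, vanishes on $\Gamma$, so $\langle\mu,({\bf v}_1)_t\rangle=0$. Consequently the pressure-controlling test function does not pollute the tangential pairing, and interference occurs in only one direction. Testing with ${\bf v}={\bf v}_2+t\,{\bf v}_1$ for a parameter $t>0$ gives
$$b((q,\mu),{\bf v})=-(q,\mathrm{div}\,{\bf v}_2)+\langle\mu,{\bf w}\rangle+t\,\|q\|_0^2\ge t\,\|q\|_0^2+\tfrac12\|\mu\|_{-1/2}^2-C\|q\|_0\|\mu\|_{-1/2},$$
where the cross term comes from bounding $|(q,\mathrm{div}\,{\bf v}_2)|\le \|q\|_0\|\mathrm{div}\,{\bf v}_2\|_0\le C\|q\|_0\|\mu\|_{-1/2}$.

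I would then absorb the cross term with Young's inequality, $C\|q\|_0\|\mu\|_{-1/2}\le \tfrac{t}{2}\|q\|_0^2+\tfrac{C^2}{2t}\|\mu\|_{-1/2}^2$, and fix $t\ge 2C^2$ so that the coefficient of $\|\mu\|_{-1/2}^2$ stays bounded below, yielding $b((q,\mu),{\bf v})\ge c\big(\|q\|_0^2+\|\mu\|_{-1/2}^2\big)$. Since $\|{\bf v}\|_1\le \|{\bf v}_2\|_1+t\|{\bf v}_1\|_1\le C'\big(\|q\|_0+\|\mu\|_{-1/2}\big)$ and $\|q\|_0^2+\|\mu\|_{-1/2}^2\ge \tfrac12\big(\|q\|_0+\|\mu\|_{-1/2}\big)^2$, dividing by $\|{\bf v}\|_1$ produces the asserted bound with $\alpha=c/(2C')$. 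I expect the main obstacle to be the second ingredient rather than the algebra: constructing the tangential lifting ${\bf v}_2$ inside ${\bf V}$ with controlled ${\bf H}^1$-norm requires care about the correct trace space on $\Gamma$ (the space ${\bf H}^{1/2}_{00}(\Gamma)$ and its dual, arising because functions in ${\bf V}$ vanish on $\Gamma_0$) and about realizing the lift with simultaneously prescribed tangential part and vanishing normal component.
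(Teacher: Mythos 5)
Your proof is correct in outline, but it takes a genuinely different route from the paper. The paper builds a \emph{single} test function that satisfies both constraints at once: it solves $\mathrm{div}\,{\bf u}=q$ in $\Omega$, ${\bf u}=0$ on $\Gamma_0$, ${\bf u}_n=0$ and ${\bf u}_t=h^{-1}(\mu)$ on $\Gamma$ (with $h^{-1}$ the Riesz isomorphism onto ${\bf H}^{1/2}_{00}(\Gamma)$), constructed explicitly as ${\bf u}=\nabla\phi_1+\nabla\phi_2+\mathrm{curl}\,\psi$ where $\phi_1$, $\phi_2$, $\psi$ solve a Dirichlet, a Neumann and a bilaplacian problem respectively; elliptic regularity then gives $\|{\bf u}\|_1\le C(\|q\|_0+\|\mu\|_{-1/2})$, and since this ${\bf u}$ realizes both data exactly, $b((q,\mu),{\bf u})$ produces $\|q\|^2+\|\mu\|_{-1/2}^2$ with no cross term to absorb. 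You instead decouple: a divergence-controlling field ${\bf v}_1\in{\bf H}^1_0(\Omega)$ from the classical Stokes inf-sup condition and a tangential lifting ${\bf v}_2$, superposed with a weight $t$ and the interference term $(q,\mathrm{div}\,{\bf v}_2)$ killed by Young's inequality --- your observation that ${\bf v}_1$ has vanishing trace, so the pollution runs in only one direction, is exactly what makes this work. Your version is more modular and dimension-independent (the paper's $\mathrm{curl}\,\psi$ is two-dimensional and the construction needs $H^2$ regularity on a convex domain), at the price of the weighting argument; the paper's version yields the lower bound with no absorption step but leans on heavier regularity theory. Both arguments stand or fall on the same delicate ingredient, which you correctly flag: lifting a prescribed tangential datum from ${\bf H}^{1/2}_{00}(\Gamma)$ into ${\bf V}$ with vanishing normal component and controlled ${\bf H}^1$ norm --- this is precisely what the paper's Steps 2--4 are doing by hand.
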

\begin{proof}
To prove this result we are inspired by \cite{kikuchi88}. We have to prove that for all $(q,\mu)\in \mathcal{M}$ there exists ${\bf u}\in {\bf V}$ such that:
 \begin{eqnarray}
\left\{
\begin{array}{ccll}
\label{pb_infsup_cont}
\displaystyle div\,{\bf u}	&=&	q			&\mbox{in }\Omega\\
 {\bf u}			&=&	\textbf{0}		&\mbox{on }\Gamma_0\\
{\bf u}_n			&=&	\textbf{0}		&\mbox{on }\Gamma\\
{\bf u}_t			&=&	h^{-1}(\mu)		&\mbox{on }\Gamma
\end{array}
\right.
\end{eqnarray}
wich satisfies
\begin{equation}
\label{stab_pmv}
\displaystyle ||{\bf u}||_1 \leq C \, \left( ||q||_0 + ||\mu||_{-\frac{1}{2}} \right),
\end{equation}
where $h^{-1}(\cdot)$ is the inverse Riesz operator $({\bf H}^{\frac{1}{2}}_{00}(\Gamma))' 	\rightarrow {\bf H}^{\frac{1}{2}}_{00}(\Gamma)$.

The proof is diveded into five steps:

\noindent {\bf Step 1}

We suppose that $\Omega$ is a convex with a regular boudary $\partial \Omega$. Let $q\in L^2_0(\Omega)$ and $\phi_1$ be the solution of:
\begin{eqnarray}
\label{step1}
\left\{
\begin{array}{llll}
\Delta \phi_1	&=&	q	&\mbox{ on }\Omega\\
\phi_1		&=&	0	&\mbox{ in }\partial\Omega
\end{array}
\right.
\end{eqnarray}
According to \cite{grisvard75} problem (\ref{step1}) admits a unique solution $\phi_1$ verifing $\phi_1\in H^2(\Omega)$ and 
\begin{equation}
\label{stab_fi1}
 \displaystyle ||\phi_1||_2\leq C ||q||_0
\end{equation}
where $C$ is a constant independent of and $q$.

\noindent {\bf Step 2}

\noindent Since $\phi_1\in H^2(\Omega)$, $\displaystyle{\frac{\partial \phi_1}{\partial n}}\in H^{\frac{1}{2}}(\partial\Omega)$, we now consider the following Neumann problem:
\begin{eqnarray}
\left\{
\begin{array}{llll}
\Delta \phi_2						&=&	0					&\mbox{ on }\Omega\\
\displaystyle\frac{\partial\phi_2}{\partial n}		&=&	\displaystyle-\frac{\partial \phi_1}{\partial n}	&\mbox{ in }\partial\Omega
\end{array}
\right.
\end{eqnarray}
According to \cite{grisvard75}, this problem admits a regular solution $\phi_2\in H^2(\Omega)$ verifing:
\begin{equation}
\label{stab_fi2}
 \displaystyle ||\phi_2||_2\leq C ||\frac{\partial \phi_1}{\partial n}||_{\frac{1}{2}}.
\end{equation}
\noindent {\bf Step 3}

Let $\psi$ be the unique solution to the following bilaplacian problem :
\begin{eqnarray}
\label{bilapl}
\left\{
\begin{array}{rlll}
\displaystyle \Delta^2 \psi			&=&	0		&\mbox{ in }\Omega\\\\
\displaystyle \psi				&=&	0		&\mbox{ in }\partial\Omega\\\\
\displaystyle \frac{\partial \psi}{\partial n}	&=&	\chi		&\mbox{ in }\partial\Omega
\end{array}
\right.
\end{eqnarray}
where 
\begin{eqnarray*}
\displaystyle\chi=
\left\{
\begin{array}{llll}
\displaystyle-\frac{\partial \phi_1}{\partial \tau}-\frac{\partial \phi_2}{\partial \tau}			&\mbox{ on }\Gamma_0\\\\
\displaystyle-\frac{\partial \phi_1}{\partial \tau}-\frac{\partial \phi_2}{\partial \tau}+h^{-1}(\mu)		&\mbox{ on }\Gamma,
\end{array}
\right.
\end{eqnarray*}
we can easily show that $\chi\in H^{\frac{1}{2}}(\partial \Omega)$. From \cite{girault79} it holds

\begin{equation}
\label{stab_psi}
 \displaystyle \psi \in H^2(\Omega) \qquad \qquad \qquad ||\psi||_2\leq C||\chi||_{\frac{1}{2}}.
\end{equation}

\noindent {\bf Step 4}

Setting ${\bf u}=\nabla \phi_1 + \nabla \phi_2+curl\,\psi$ with
\begin{eqnarray}
curl\, \psi=
\left[
\begin{array}{c}
\displaystyle \frac{\partial \psi}{\partial x_2}\\\\
\displaystyle -\frac{\partial \psi}{\partial x_1}\\
\end{array}
\right]
\end{eqnarray}
so that ${\bf u}_t=h^{-1}(\mu)$ on $\Gamma$.

\noindent Furthermore we obtain:
\begin{eqnarray}
\label{conti_u_1}
\begin{array}{rcl}
||{\bf u}||_1	&\leq&	\displaystyle||\nabla \phi_1||_1 + ||\nabla \phi_2||_1 + ||curl \psi||_1	\\\\
		&\leq&	\displaystyle||\phi_1||_2 + ||\phi_2||_2 + ||\psi||_2	\\\\
		&\leq&	\displaystyle C\left( ||q||_0 + ||\frac{\partial \phi_1}{\partial n}||_{\frac{1}{2}} + ||\chi||_{\frac{1}{2}}\right) .	\\\\
\end{array}
\end{eqnarray}
where $C>0$ is a generic constant.

\noindent Using inequalities (\ref{stab_fi1}), (\ref{stab_fi2}), (\ref{stab_psi}), the continuity of normal trace application from ${\bf H}^1(\Omega)$ onto ${\bf H}^\frac{1}{2}(\partial\Omega)$ and the continuity of $h^{-1}$, (\ref{conti_u_1}) becomes:
\begin{equation*}
\displaystyle ||{\bf u}||_1 \leq C \, \left( ||q||_0 + ||\mu||_{-\frac{1}{2}} \right).
\end{equation*}

\noindent {\bf Step 5}

\begin{eqnarray}
\label{inf_sup}
\begin{array}{rcl}
\displaystyle \sup\limits_{\bf v\in V}\frac{b((q,\mu),{\bf v})}{||{\bf v}||_{1,\Omega} }	&\geq&	\displaystyle \frac{b((q,\mu),{\bf u})}{||{\bf u}||_{1} },\\\\
 	&\geq&		\displaystyle\frac{||q||^2+||\mu||^2_{-\frac{1}{2}}}{||{\bf u}||_{1}},\\\\
 	&\geq&		\displaystyle\frac{1}{C}\frac{||q||^2+||\mu||^2_{-\frac{1}{2}}}{||q||+||\mu||_{-\frac{1}{2}}},\\\\
	&\geq&		\displaystyle\frac{1}{2C} \left( ||q||+||\mu||_{-\frac{1}{2}} \right).
\end{array}
\end{eqnarray}
Then take $\displaystyle\alpha=\frac{1}{2C}$ to finish the proof.
\end{proof}
%
\begin{theorem}\cite{kikuchi88}
 \label{exist_uniq_mix_vect}
Suppose that $a(\cdot,\cdot)$ is continuous, V-elliptic bilinear form on ${\bf V}(\Omega)$ and (\ref{infsup_cont}) holds. Then there exists a unique $({\bf u},(p, \lambda))$ solution of mixed problem (\ref{pb_mix_3cham_vect}). Moreover, $({\bf u},(p, \lambda))$ is also the unique solution of the saddle-point problem (\ref{pt_selle}). 
\end{theorem}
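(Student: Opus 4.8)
The plan is to recognise (\ref{pb_mix_3cham_vect}) as an abstract mixed variational inequality to which the Brezzi--Kikuchi--Oden machinery applies: the three hypotheses — continuity and ${\bf V}$-ellipticity of $a(\cdot,\cdot)$ together with the inf-sup condition (\ref{infsup_cont}) — are precisely what such a theorem requires. I would organise the argument in three stages, producing the velocity first, then the multiplier pair $(p,\lambda)$, and finally uniqueness and the saddle-point equivalence. \emph{Stage 1 (the velocity).} I would begin by eliminating the multipliers through a duality computation: for fixed ${\bf v}\in{\bf V}$ one has $\sup_{(q,\mu)\in\Lambda} b((q,\mu),{\bf v}) = j({\bf v})$ whenever ${\bf v}\in{\bf V}_{div}(\Omega)$ and $+\infty$ otherwise, the value $+\infty$ coming from the unbounded component $q\in L^2_0(\Omega)$ (which forces $div\,{\bf v}=0$) while the bounded component $\mu\in\mathcal{Q}$ produces $\int_\Gamma g|{\bf v}_t|$ by the second description of $\mathcal{Q}$. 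Hence the primal reduction of (\ref{pt_selle}) is exactly the constrained non-differentiable minimisation (\ref{pb_minimi}) on ${\bf V}_{div}(\Omega)$, equivalently the variational inequality (\ref{ineq_var_stokes_gen_sigm}). Since $a$ is continuous and ${\bf V}$-elliptic and $j$ is convex, proper and lower semicontinuous, this is an elliptic variational inequality of the second kind and admits a unique solution ${\bf u}\in{\bf V}_{div}(\Omega)$, the fact already quoted from \cite{glow}.

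\emph{Stage 2 (the multipliers).} With ${\bf u}$ fixed I would recover $\lambda$ first, then $p$. Fujita's characterisation (\ref{ineq_var_sigma}) furnishes a shear stress $\sigma_t\in({\bf H}^{\frac{1}{2}}_{00}(\Gamma))'$ with $|\sigma_t|\le g$ that represents the friction term along ${\bf u}$, so that setting $\lambda=-\sigma_t\in\mathcal{Q}$ reduces the variational inequality to the equation $a({\bf u},{\bf v})+\langle\lambda,{\bf v}_t\rangle=L({\bf v})$ for all ${\bf v}$ in the subspace ${\bf V}_{div}(\Omega)$. The residual linear form ${\bf v}\mapsto L({\bf v})-a({\bf u},{\bf v})-\langle\lambda,{\bf v}_t\rangle$ then vanishes on the kernel $\{{\bf v}\in{\bf V}:\,div\,{\bf v}=0\}={\bf V}_{div}(\Omega)$; the inf-sup condition (\ref{infsup_cont}), restricted to its pressure component, is exactly the closed-range (de Rham) property that yields a unique $p\in L^2_0(\Omega)$ representing this form, producing the first line of (\ref{pb_mix_3cham_vect}). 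The second line then holds because $\lambda\in\mathcal{Q}$ and the defining property of $\mathcal{Q}$ give $b((q-p,\mu-\lambda),{\bf u})\le 0$ for every $(q,\mu)\in\Lambda$.

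\emph{Stage 3 (uniqueness and saddle point).} Uniqueness of ${\bf u}$ follows from the strict convexity of $\mathcal{J}$ (a consequence of ${\bf V}$-ellipticity); uniqueness of $(p,\lambda)$ follows because the difference of two admissible multipliers annihilates $b(\cdot,{\bf v})$ for all ${\bf v}\in{\bf V}$, so that (\ref{infsup_cont}) forces $\|p_1-p_2\|_0+\|\lambda_1-\lambda_2\|_{-\frac{1}{2}}=0$. For the equivalence with (\ref{pt_selle}) I would differentiate the Lagrangian $\mathcal{L}$: stationarity of $\mathcal{L}(\cdot,p,\lambda)$ in the linear variable ${\bf v}\in{\bf V}$ gives the equality of the first line, while, since $\mathcal{L}({\bf u},\cdot,\cdot)$ is affine in $(q,\mu)$ over the convex set $\Lambda$, maximisation there is equivalent to the variational inequality of the second line; reading the two facts together yields both saddle-point inequalities.

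The main obstacle is Stage 2, the recovery of the two multipliers with control of their norms: one must extract $\lambda$ in the dual trace space $({\bf H}^{\frac{1}{2}}_{00}(\Gamma))'$ and represent the pressure $p\in L^2_0(\Omega)$, both of which lean essentially on the inf-sup estimate (\ref{infsup_cont}) proved in the preceding lemma. The delicate points are verifying that the Fujita multiplier genuinely lands in $\mathcal{Q}$, i.e. $|\lambda|\le g$, and that the supremum computation of Stage 1 is legitimate, so that the divergence-free constraint is correctly encoded by the unbounded pressure test functions.
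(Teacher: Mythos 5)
The paper does not actually prove this theorem: it is imported wholesale from Kikuchi--Oden \cite{kikuchi88}, with the preceding lemma supplying the inf-sup hypothesis (\ref{infsup_cont}). So there is no internal argument to compare yours against; what can be judged is whether your sketch would reconstruct the cited result, and in essence it would --- it is the standard Lagrangian-duality proof. Your Stage 1 computation of $\sup_{(q,\mu)\in\Lambda}b((q,\mu),{\bf v})$ is correct (note that $div\,{\bf v}\in L^2_0(\Omega)$ automatically for ${\bf v}\in{\bf V}$ since ${\bf v}\cdot{\bf n}$ vanishes on all of $\partial\Omega$, so the pressure component really does enforce $div\,{\bf v}=0$), and Stage 3 (uniqueness of the multiplier pair from (\ref{infsup_cont}) applied to the difference, plus the convex-analysis equivalence with (\ref{pt_selle})) is routine.

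Two points in Stage 2 deserve to be made explicit rather than left as ``delicate points''. First, the second line of (\ref{pb_mix_3cham_vect}) reduces, since $div\,{\bf u}=0$, to $\langle\mu-\lambda,{\bf u}_t\rangle\le 0$ for all $\mu\in\mathcal{Q}$; membership $\lambda\in\mathcal{Q}$ alone only gives $\langle\mu,{\bf u}_t\rangle\le j({\bf u})$, and you additionally need the complementarity relation $\langle\lambda,{\bf u}_t\rangle=j({\bf u})$, which is exactly Proposition 2.1 (equation (\ref{equiv_tresca})) rewritten for $\lambda=-\sigma_t$. Without that equality the inequality line does not follow. Second, $\mathcal{Q}$ is defined inside $(L^2(\Gamma))^{d-1}$ while Fujita's multiplier lives a priori in $({\bf H}^{\frac{1}{2}}_{00}(\Gamma))'$; the passage to $L^2$ uses the pointwise bound $|\sigma_t|\le g$ together with $g\in L^2(\Gamma)$, and this is precisely where the two descriptions of $\mathcal{Q}$ given in the paper are reconciled. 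With these two steps filled in, your argument is a legitimate self-contained proof of the quoted theorem, arguably more informative than the paper's bare citation.
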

%
%
\section{Error estimates}
The present section is devoted to finite element approximation of the saddle-point problem (\ref{pt_selle}). The key point lies in finite element discretization of the closed convex $\mathcal{Q}$ of the Lagrange multipliers which leads to a well-posed discrete problem and gives a good convergence rate for the approximate solution.

\noindent We use classical {\it P1 bubble-P1} finite element to disretize $({\bf u}, p)$ and {\it P1} finite element on $\Gamma$ for the Lagrange multiplier $\lambda$. This choice is motivated by Brezzi's and Sassi's results, see \cite{brezzi78,Sassi_baill}.

\noindent $\Omega$ is supposed to be polygonal. Let $\mathcal{T}_h$ be a regular partition of $\overline{\Omega}$ with triangles in the sense of \cite{ciarlet80}. We denote by $\mathbb{P}_n(\kappa)$ the space of polynomials of degree less and equal to $n\in \mathbb{N}$ defined on $\kappa\in \mathcal{T}_h$. We denote by $\mathcal{B}_{\kappa}$ the space of bubble functions defined on $\kappa$ which is a sub-space of $H^1_0(\kappa)$. Then we can define the following discrete spaces :

%
\begin{eqnarray*}
 \begin{array}{l}
\displaystyle \mathbb{B}=\bigoplus_{\kappa\in \mathcal{T}_h} \mathcal{B}_{\kappa},\qquad
\mathcal{V}_{h}=\{{\bf v}_h\in \mathcal{C}^0(\overline{\Omega}); {\bf v}_{h|\kappa}\in \mathbb{P}_1 \; \forall \kappa\in \mathcal{T}_h,
\; {\bf v}_{h|\Gamma_0}=0, \mbox{ and }{\bf v}_{h}.\textbf{n}_{|\Gamma}=0\},\\\\
 \displaystyle {\bf V}_h=[\mathcal{V}_{h}+ \mathbb{B}]^2, \qquad \mathcal{W}_h=\{{\bf v}_{h|\Gamma}, \quad {\bf v}_h\in {\bf V}_h\},	\\\\
\displaystyle \mathbb{L}_h=\{q_h\in \mathcal{C}^0(\overline{\Omega}); q_{h|\kappa}\in \mathbb{P}_1\, \forall \kappa\in \mathcal{T}_h,\; \int_\Omega q_h=0\},		\\\\
\mathcal{Q}_h=\left\lbrace \displaystyle \mu_h \in \mathcal{W}_h, \, \int_{\Gamma}\mu_h \psi_h - \int_{\Gamma} g |\psi_h|\leq 0 \, \forall \psi_h\in \mathcal{W}_h\right\rbrace,\\\\
\mathcal{M}_h= \mathbb{L}_h\times \mathcal{W}_h, \qquad \Lambda_h=\mathbb{L}_h\times \mathcal{Q}_h.
\end{array}
\end{eqnarray*}

\section*{Remark}
$\mathcal{Q}_h$ is an external approximation of $\mathcal{Q}$, so the discretization is non-conforming and would weaken its convergence order.

\noindent Disicretizing (\ref{pb_mix_3cham_vect}) we obtain
%
\begin{eqnarray}
\label{pb_mix_3cham_vect_disc}
\left\{
\begin{array}{rcll}
\mbox{Find }({\bf u}_h,(p_h, \lambda_h)) \in {\bf V}_h\times \Lambda_h \mbox{such that:}\\\\
a({\bf u}_h,{\bf v}_h)+ b((p_h,\lambda_h),{\bf v}_h)	&=&	L({\bf v}_h)	& \forall {\bf v}_h\in {\bf V}_h,\\\\
\displaystyle 		b((q_h-p_h,\mu_h-\lambda_h),{\bf u}_h)	&\leq&	0		& \forall (q_h,\mu_h) \in \Lambda_h,
\end{array}
\right.
\end{eqnarray}
where $\Lambda_h$ is a closed convex of $\mathcal{M}_h$.

\noindent A sufficient condition for the existence and uniqness of the solution to problem (\ref{pb_mix_3cham_vect_disc}) is the {\it inf-sup}  condition \cite{brezzi78}.
%
\begin{lemma}
 \label{inf_sup_disc_prop}
These two propositions are equivalent:
\begin{itemize}
	\item [$\ast$]	There exists a constant $\beta>0$ independent of $h$  such that :
		\begin{eqnarray}
		\label{inf_sup_disc_vect}
		\begin{array}{lcll}
		\displaystyle \sup\limits_{{\bf v}_h\in{\bf V}_h}\frac{b((q_h,\mu_h), {\bf v}_h)}{||{\bf v}_h||_{1} }\geq\beta\left( ||q_h||_0 +||\mu_h||_{-\frac{1}{2}}\right) \qquad \forall (q_h,\mu_h)\in \mathcal{M}_h.
		\end{array}
		\end{eqnarray}
	\item[$\ast$]	There exists two constants $\beta_1>0$ and  $\beta_2>0$ independent of $h$  such that :
			\begin{eqnarray}
			\label{2cond_infsup}
			\left\{
			\begin{array}{l}
			\displaystyle \sup\limits_{{\bf v}_h\in {\bf V}_h}\frac{\left(q_h,div{\bf v}_h\right)}{ ||{\bf v}_h||_1}\geq \beta_1 ||q_h||_0 \qquad \forall\, {q_h\in \mathbb{L}_h}\\\\
			\mbox{and}\\
			\displaystyle \sup\limits_{{\bf v}_h\in {\bf V}_h}\frac{\left\langle \mu_h, {\bf v}_{th}\right \rangle}{ ||{\bf v}_h||_1}\geq \beta_2 ||{\mu}_h||_{-\frac{1}{2}} \qquad \forall\, {\mu_h\in \mathcal{W}_h}.
			\end{array}
			\right.
			\end{eqnarray}
\end{itemize}

%
\end{lemma}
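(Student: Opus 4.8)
The plan is to treat the two implications separately. The forward implication --- that the combined condition \eqref{inf_sup_disc_vect} yields the pair \eqref{2cond_infsup} --- is a direct specialization. Recalling $b((q_h,\mu_h),\mathbf{v}_h)=-(q_h,\mathrm{div}\,\mathbf{v}_h)+\langle\mu_h,\mathbf{v}_{th}\rangle$, I would insert $\mu_h=0$ into \eqref{inf_sup_disc_vect}; since the supremum is invariant under $\mathbf{v}_h\mapsto-\mathbf{v}_h$, this gives the divergence inf-sup of \eqref{2cond_infsup} with $\beta_1=\beta$. Inserting $q_h=0$ gives the tangential inf-sup with $\beta_2=\beta$. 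No structural information is needed here.

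The reverse implication is the substantial one, and it cannot hold on the strength of \eqref{2cond_infsup} alone as a purely abstract fact: two separate inf-sup conditions do not in general combine (a single test field may annihilate $b$ while both $\|q_h\|_0$ and $\|\mu_h\|_{-\frac12}$ stay bounded away from zero). The mechanism that rescues the estimate here is that the tangential trace on $\Gamma$ is blind to velocity modifications supported away from $\Gamma$. Concretely, given $(q_h,\mu_h)\in\mathcal{M}_h$ I would build the test field $\mathbf{v}_h=\mathbf{w}_h+\delta\,\mathbf{z}_h$, with $\delta>0$ to be fixed. I take $\mathbf{z}_h$ from the tangential inf-sup (second line of \eqref{2cond_infsup}), scaled so that $\|\mathbf{z}_h\|_1=\|\mu_h\|_{-\frac12}$ and $\langle\mu_h,\mathbf{z}_{th}\rangle\ge\beta_2\|\mu_h\|_{-\frac12}^2$. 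For the divergence corrector $\mathbf{w}_h$ I would use not the full divergence inf-sup but its restriction to the subspace $\mathbf{V}_h\cap\mathbf{H}^1_0(\Omega)$ of discrete velocities vanishing on all of $\partial\Omega$ (equivalently, with zero tangential trace on $\Gamma$, the normal trace already vanishing there). On this subspace the P1 bubble/P1 pair is precisely the MINI element, whose stability \cite{brezzi78} furnishes $\mathbf{w}_h$ with $\|\mathbf{w}_h\|_1=\|q_h\|_0$, $-(q_h,\mathrm{div}\,\mathbf{w}_h)\ge c\|q_h\|_0^2$, and, crucially, $\mathbf{w}_{th}|_\Gamma=0$ (signs being fixed by replacing $\mathbf{w}_h$ with $-\mathbf{w}_h$ if necessary).

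With this choice the adverse cross term $\langle\mu_h,\mathbf{w}_{th}\rangle$ vanishes identically, so $b((q_h,\mu_h),\mathbf{v}_h)\ge c\|q_h\|_0^2+\delta\beta_2\|\mu_h\|_{-\frac12}^2-\delta(q_h,\mathrm{div}\,\mathbf{z}_h)$, and the only surviving interaction is of order $\delta$. Estimating $|(q_h,\mathrm{div}\,\mathbf{z}_h)|\le C\|q_h\|_0\|\mathbf{z}_h\|_1=C\|q_h\|_0\|\mu_h\|_{-\frac12}$, absorbing it by Young's inequality, and then choosing $\delta\le1$ small enough, leaves $b((q_h,\mu_h),\mathbf{v}_h)\ge c_0\big(\|q_h\|_0^2+\|\mu_h\|_{-\frac12}^2\big)$. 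Since $\|\mathbf{v}_h\|_1\le\|q_h\|_0+\|\mu_h\|_{-\frac12}$ and $x^2+y^2\ge\frac12(x+y)^2$, dividing yields \eqref{inf_sup_disc_vect} with $\beta=c_0/2$. I expect the genuine difficulty to be exactly this decoupling step: one must recognize that the divergence can be stabilized without disturbing the $\Gamma$-trace, so that the balancing in $\delta$ sees only a harmless lower-order interaction rather than the uncontrollable cancellation that the abstract form would otherwise permit.
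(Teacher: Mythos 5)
Your argument is correct, but it follows a genuinely different route from the paper's. The paper tries to prove the equivalence abstractly, by asserting the identity (\ref{equiv_sup_sup}) that the supremum of the sum $\left(\mu_h,{\bf v}_{th}\right)+\left(q_h,div({\bf v}_h)\right)$ equals the sum of the separate suprema, and then arguing (\ref{recip}) by contradiction; the contradiction step, however, splits the hypothesis ``$\left(\mu_h,{\bf v}_{th}-\theta_h\right)+\left(q_h,div({\bf v}_h-\omega_h)\right)<0$ for all ${\bf v}_h$'' into the two separate sign conditions, which does not follow (a sum can be negative with neither summand negative for all test functions). Your opening observation that the reverse implication \emph{cannot} hold as a purely abstract consequence of the two separate inf-sup conditions is exactly right --- a one-dimensional example with $b_1(q,v)=qv$ and $b_2(\mu,v)=-\mu v$ already defeats it --- and it pinpoints precisely what the paper's argument glosses over. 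Your construction supplies the missing structural ingredient: you take the divergence corrector ${\bf w}_h$ in ${\bf V}_h\cap{\bf H}^1_0(\Omega)$, where the $P1$ bubble/$P1$ pair is the MINI element whose stability is the very result of \cite{brezzi84} that the paper invokes in Proposition \ref{inf_sup_disc_prop2}, so that $\langle\mu_h,{\bf w}_{th}\rangle=0$ and the only cross term $\left(q_h,div({\bf z}_h)\right)$ is absorbed by Young's inequality after scaling by a small $\delta$. This is the standard and rigorous way to combine the two conditions (cf.\ the analogous argument in \cite{haslinger04}), at the price of using a hypothesis slightly stronger than the literal second bullet of the lemma (the divergence inf-sup with interior test functions rather than test functions in all of ${\bf V}_h$); since that stronger form is exactly what \cite{brezzi84} provides, your proof fully supports the use made of the lemma in the paper, whereas the paper's own proof of the reverse implication has a genuine logical gap. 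One cosmetic point: your ${\bf z}_h$ should be taken as a near-maximizer of the tangential inf-sup (replacing $\beta_2$ by $\beta_2/2$), since the supremum need not be attained; this changes nothing else.
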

%
\begin{proof}
 \label{proof_sup_sup}

\noindent To prove this result it suffices to show that:
\begin{equation}
\label{equiv_sup_sup}
  \displaystyle \sup\limits_{{\bf v}_h\in {\bf V}_h}  \left\lbrace \left(\mu_h, {\bf v}_{th}\right) \, + \, \left(q_h, div({\bf v}_h)\right)\right\rbrace = \sup\limits_{{\bf v}_h\in {\bf V}_h}  \left\lbrace \left(\mu_h, {\bf v}_{th}\right)\right\rbrace + \sup\limits_{{\bf v}_h\in {\bf V}_h}  \left\lbrace \left(q_h, div({\bf v}_h)\right)\right\rbrace
\end{equation}

\noindent Let us prove the non trivial direction :
\begin{equation}
\label{recip}
 \sup\limits_{{\bf v}_h\in {\bf V}_h}  \left\lbrace \left(\mu_h, {\bf v}_{th}\right)\right\rbrace + \sup\limits_{{\bf v}_h\in {\bf V}_h}  \left\lbrace \left(q_h, div({\bf v}_h)\right)\right\rbrace \leq \sup\limits_{{\bf v}_h\in {\bf V}_h}  \left\lbrace \left(\mu_h, {\bf v}_{th}\right) \, + \, \left(q_h, div({\bf v}_h)\right)\right\rbrace
\end{equation}

\noindent which is equivalent to
%
$$
\label{recip2}
 \left(\mu_h, {\bf u}_{th}\right)+ \left(q_h, div({\bf w}_h)\right) \leq \sup\limits_{{\bf v}_h\in {\bf V}_h}  \left\lbrace \left(\mu_h, {\bf v}_{th}\right) \, + \, \left(q_h, div({\bf v}_h)\right)\right\rbrace \qquad \, \forall ({\bf u}_h,{\bf w}_h) \in {\bf V}_h^2
$$
%

\vspace{0.5cm}
\noindent We suppose that (\ref{recip}) is not valid, ie $\exists \, \left(\theta_h,\omega_h\right)\in {\bf V}_h^2$ such that :
\begin{equation}
\label{absud1}
\displaystyle \sup\limits_{{\bf v}_h\in {\bf V}_h}  \left\lbrace \left(\mu_h, {\bf v}_{th}\right) \, + \, \left(q_h, div({\bf v}_h)\right)\right\rbrace < \left(\mu_h, \theta_h\right) + \left(q_h, div(\omega_h)\right)
\end{equation}
\begin{eqnarray*}
\label{absud2}
\displaystyle  (\ref{absud1})	&\Rightarrow &	\left(\mu_h, v_{th}\right) \, + \, \left(q_h, div(v_{th})\right)< \left(\mu_h, \theta_h\right) + \left(q_h, div(\omega_h)\right) \qquad \; \forall \, v_h\in {\bf V}_h \\\\
				&\Rightarrow &	\left(\mu_h, v_{th}-\theta_h\right) \, + \, \left(q_h, div(v_{h}-\omega_h)\right)< 0.
\end{eqnarray*}
%

\noindent Since we know that:
\begin{equation}
\label{inf_sup_sup}
 \forall \mu_h \in (L^2_h(\Gamma))^{d-1}\quad \gamma ||\mu_h|| \leq \sup\limits_{v_h\in {\bf V}_h} \frac{\left(\mu_h, v_{th}\right)}{||v_h||}
\; \Leftrightarrow \;
\left\{
 \begin{array}{l}
 \displaystyle \forall \mu_h \in (L^2_h(\Gamma))^{d-1} \; \exists \overline{\varphi}_h\,  \in {\bf V}_h \mbox{ such that }\\\\
 \left(\mu_h, \overline{\varphi}_h\right)\geq \gamma||\overline{\varphi}_h||\, \quad ||\mu_h||, \gamma >0
\end{array}
\right\}.
\end{equation}
and suppose that $\left(\mu_h, v_{th}-\theta_h\right)<0$ $\forall \, v_h\in {\bf V}_h$, wich remains valid for $v_h=\overline{\varphi}_h+\theta_h$ so that
$$
\left(\mu_h, \overline{\varphi}_h\right)<0,
$$
which contradicts (\ref{inf_sup_sup}).

\noindent The same reasoning can be applied to $\left(q_h, div(v_{h}-\omega_h)\right)< 0$ since $\left(\cdot , div(\cdot) \right)$ verifies similar {\it inf-sup} condition. This ends the proof.

\end{proof}
%
%
\begin{proposition}
\label{inf_sup_disc_prop2}
 There exists a constant $\beta>0$ independent of $h$  such that :
\begin{eqnarray}
\label{inf_sup_disc_vect2}
\begin{array}{lcll}
\displaystyle \sup\limits_{{\bf v}_h\in{\bf V}_h}\frac{b((q_h,\mu_h), {\bf v}_h)}{||{\bf v}_h||_{1} }\geq\beta\left( ||q_h||_0 +||\mu_h||_{-\frac{1}{2}}\right) \qquad \forall (q_h,\mu_h)\in \mathcal{M}_h
\end{array}
\end{eqnarray}
%
\end{proposition}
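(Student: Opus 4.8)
The plan is to reduce the combined estimate (\ref{inf_sup_disc_vect2}) to the two decoupled inf-sup conditions by invoking the equivalence established in Lemma \ref{inf_sup_disc_prop}. Observe that the statement (\ref{inf_sup_disc_vect2}) is \emph{verbatim} the first item of Lemma \ref{inf_sup_disc_prop}; hence, by that lemma, it suffices to prove the pair (\ref{2cond_infsup}), namely a divergence inf-sup condition for the pressure multiplier $q_h\in\mathbb{L}_h$ and a tangential-trace inf-sup condition for the friction multiplier $\mu_h\in\mathcal{W}_h$. Treating the two constraints separately is legitimate precisely because of the sup-sup splitting proved in the lemma, so I would organize the argument in two independent verifications and then conclude with $\beta=\min(\beta_1,\beta_2)$ (up to the constant in (\ref{2cond_infsup})).

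The first condition is the classical Stokes (Babu\v{s}ka--Brezzi) stability of the \emph{P1 bubble/P1} (MINI) pair. Here I would appeal directly to Brezzi's result \cite{brezzi78}: the bubble space $\mathbb{B}$, whose elements belong to $H^1_0(\kappa)$ and therefore vanish on each element boundary (and a fortiori on $\partial\Omega$), furnishes enough interior degrees of freedom to control the piecewise-linear pressure. For a given $q_h\in\mathbb{L}_h$ one tests with a linear combination of bubbles whose elementwise integral matches $q_h$, obtaining $(q_h,\mathrm{div}\,{\bf v}_h)\ge \beta_1\,||q_h||_0\,||{\bf v}_h||_1$ with $\beta_1>0$ independent of $h$ on the regular family $\mathcal{T}_h$. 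Since bubbles do not contribute to the boundary trace, this step does not interact with the second one.

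The second condition carries the real difficulty and is where I would rely on the trace/multiplier analysis of \cite{Sassi_baill}. The strategy is: (i) build a \emph{stable discrete lifting} $R_h:\mathcal{W}_h\to{\bf V}_h$ whose tangential trace on $\Gamma$ reproduces its argument, with $||R_h\psi_h||_1\le C\,||\psi_h||_{\frac{1}{2},\Gamma}$ and $C$ independent of $h$; testing the supremum with $R_h\psi_h$ then gives, for every $\psi_h\in\mathcal{W}_h$,
$$
\sup_{{\bf v}_h\in{\bf V}_h}\frac{\langle\mu_h,{\bf v}_{th}\rangle}{||{\bf v}_h||_1}\ \ge\ \frac{\langle\mu_h,\psi_h\rangle}{C\,||\psi_h||_{\frac{1}{2},\Gamma}},
$$
so taking the supremum over $\psi_h\in\mathcal{W}_h$ bounds the left-hand side below by $C^{-1}$ times the \emph{discrete} dual norm of $\mu_h$. (ii) Close the gap to the continuous norm $||\mu_h||_{-\frac{1}{2}}$: since $\mu_h\in\mathcal{W}_h\subset(L^2(\Gamma))^{d-1}$, using the $L^2(\Gamma)$-orthogonal projection $\pi_h$ onto $\mathcal{W}_h$ one has $\langle\mu_h,\varphi\rangle=\langle\mu_h,\pi_h\varphi\rangle$ for all $\varphi\in {\bf H}^{\frac{1}{2}}(\Gamma)$, and the $H^{\frac{1}{2}}$-stability $||\pi_h\varphi||_{\frac{1}{2},\Gamma}\le C\,||\varphi||_{\frac{1}{2},\Gamma}$ then yields $||\mu_h||_{-\frac{1}{2}}\le C\,||\mu_h||_{-\frac{1}{2},h}$, the discrete dual norm.

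The main obstacle is this second condition, and specifically the two mesh-dependent stability facts it rests on: the existence of an $h$-uniformly stable discrete tangential extension $R_h$, and the $H^{\frac{1}{2}}(\Gamma)$-stability of the projection $\pi_h$ onto the trace space $\mathcal{W}_h$. Both are delicate because the $H^{-\frac{1}{2}}$ norm is negative-order and nonlocal, so discrete testing cannot be compared to continuous testing by a naive inverse inequality without losing $h$-uniformity; they require the quasi-uniformity of $\mathcal{T}_h$ near $\Gamma$ and are exactly the ingredients supplied by \cite{Sassi_baill}. Once both are in hand, combining the two verifications and invoking Lemma \ref{inf_sup_disc_prop} proves (\ref{inf_sup_disc_vect2}).
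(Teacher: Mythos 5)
Your proposal follows essentially the same route as the paper: it invokes Lemma \ref{inf_sup_disc_prop} to split (\ref{inf_sup_disc_vect2}) into the two decoupled conditions (\ref{2cond_infsup}) and then establishes each separately; the paper does exactly this, simply citing \cite{brezzi84} for the divergence inf--sup of the \emph{P1 bubble/P1} pair and \cite{faker03} for the tangential-trace inf--sup, where you instead sketch the underlying arguments (bubble testing, stable discrete lifting, $H^{\frac{1}{2}}$-stability of $\pi_h$). The only small discrepancy is bibliographic: the MINI-element stability is the Arnold--Brezzi--Fortin result \cite{brezzi84} rather than \cite{brezzi78}, and the multiplier inf--sup is taken from \cite{faker03} rather than \cite{Sassi_baill}, but the mathematical content you attribute to these references is the right one.
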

%
\begin{proof}
Recall that
$$
 \forall \left(\left(q_h,\mu_h\right),{\bf v}_h\right)\in \mathcal{M}_h \times {\bf V}_h  \qquad b\left(\left(q_h,\mu_h\right), {\bf v}_h\right)=\left(-q_h, div({\bf v}_h)\right)+\left\langle \mu_h, {\bf v}_{th}\right \rangle
$$
According to lemma \ref{inf_sup_disc_prop}, it suffices to show that :

\begin{equation}
\label{infsup_qh}
\displaystyle \exists\, \beta_1>0 \, \mbox{ tel que }\forall\, {q_h\in \mathbb{L}_h} \quad 
\sup\limits_{{\bf v}_h\in {\bf V}_h}\frac{\left(q_h,div{\bf v}_h\right)}{ ||{\bf v}_h||_1}\geq \beta_1 ||q_h||_0
\end{equation}
and
\begin{equation}
\label{infsup_muh}
\displaystyle \exists\, \beta_2>0 \, \mbox{ tel que }\forall\, {\mu_h\in \mathcal{W}_h} \quad 
\sup\limits_{{\bf v}_h\in {\bf V}_h}\frac{\left\langle \mu_h, {\bf v}_{th}\right \rangle}{ ||{\bf v}_h||_1}\geq \beta_2 ||{\mu}_h||_{-\frac{1}{2}}
\end{equation}
which are both (\ref{infsup_qh}) and (\ref{infsup_muh}) established in \cite{brezzi84} and \cite{faker03} respctively.
\end{proof}
%
Now we will derive error estimates for primal variable, being inspired by \cite{haslinger04}.

\begin{lemma}\cite{lhalouani99}
\label{lemme_intro}
Let $\left({\bf u}, p, \lambda \right)$ and $\left({\bf u}_h, p_h, \lambda_h \right)$ be solutions to (\ref{pb_mix_3cham_vect}), (\ref{pb_mix_3cham_vect_disc}) respectively. Then for any $\left({\bf v}_h, q_h, \mu_h \right)\in {\bf V}_h\times \Lambda_h$ it holds:
%
\begin{eqnarray}
\label{eq_lem_intro}
\begin{array}{rcl}
a({\bf u}-{\bf u}_h,{\bf u}-{\bf u}_h)	&\leq&	a({\bf u}-{\bf u}_h,{\bf u}-{\bf v}_h) 												+b((p,\lambda)-(q_h,\mu_h),{\bf u}_h-{\bf u})											+b((p,\lambda)-(p_h,\lambda_h),{\bf u}-{\bf v}_h)\\\\				
						&&+b((p,\lambda)-(q_h,\mu_h),{\bf u}) + b((p_h,\lambda_h)-(p,\lambda),{\bf u}).
\end{array}
\end{eqnarray}
%
\end{lemma}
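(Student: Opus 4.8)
The plan is to obtain (\ref{eq_lem_intro}) by an energy-splitting argument anchored on the Galerkin-type orthogonality carried by the \emph{first} equations of (\ref{pb_mix_3cham_vect}) and (\ref{pb_mix_3cham_vect_disc}), closed by a single use of the discrete variational inequality. The noteworthy feature is that the \emph{continuous} inequality is never invoked; this is why the consistency term $b((p,\lambda)-(q_h,\mu_h),{\bf u})$ survives in the statement, reflecting the external (non-conforming) nature of $\mathcal{Q}_h$ relative to $\mathcal{Q}$ pointed out in the Remark.

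First I would fix an arbitrary $({\bf v}_h,q_h,\mu_h)\in{\bf V}_h\times\Lambda_h$ and split, by bilinearity of $a(\cdot,\cdot)$,
\[
a({\bf u}-{\bf u}_h,{\bf u}-{\bf u}_h)=a({\bf u}-{\bf u}_h,{\bf u}-{\bf v}_h)+a({\bf u}-{\bf u}_h,{\bf v}_h-{\bf u}_h).
\]
Because the velocity discretization is conforming (${\bf V}_h\subset{\bf V}$, the finite element functions vanishing on $\Gamma_0$ and having zero normal trace on $\Gamma$), the increment ${\bf v}_h-{\bf u}_h$ is an admissible test function in the first line of both (\ref{pb_mix_3cham_vect}) and (\ref{pb_mix_3cham_vect_disc}). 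Testing each with ${\bf v}_h-{\bf u}_h$ and subtracting, the $L(\cdot)$ contributions cancel and leave the orthogonality relation
\[
a({\bf u}-{\bf u}_h,{\bf v}_h-{\bf u}_h)=b((p_h,\lambda_h)-(p,\lambda),{\bf v}_h-{\bf u}_h).
\]

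Next I would write ${\bf v}_h-{\bf u}_h=({\bf v}_h-{\bf u})+({\bf u}-{\bf u}_h)$ in the second slot of $b$. The portion on ${\bf v}_h-{\bf u}$ becomes $b((p,\lambda)-(p_h,\lambda_h),{\bf u}-{\bf v}_h)$, which is already one of the announced terms. For the portion on ${\bf u}-{\bf u}_h$, I insert $(q_h,\mu_h)$ and regroup purely algebraically; using only the bilinearity of $b$ one checks the exact identity
\begin{eqnarray*}
b((p_h,\lambda_h)-(p,\lambda),{\bf u}-{\bf u}_h)&=&b((p,\lambda)-(q_h,\mu_h),{\bf u}_h-{\bf u})+b((p,\lambda)-(q_h,\mu_h),{\bf u})\\
&&+b((p_h,\lambda_h)-(p,\lambda),{\bf u})+b((q_h,\mu_h)-(p_h,\lambda_h),{\bf u}_h).
\end{eqnarray*}

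The decisive and final step is that the last summand is nonpositive: $b((q_h,\mu_h)-(p_h,\lambda_h),{\bf u}_h)\le 0$ is precisely the second line of the discrete problem (\ref{pb_mix_3cham_vect_disc}), legitimate because $(q_h,\mu_h)\in\Lambda_h$. Discarding this term turns the identity into an inequality, and assembling the three displays reproduces (\ref{eq_lem_intro}) verbatim. I anticipate no real obstacle beyond bookkeeping; the one point demanding care is the sign and admissibility in that last application — one must keep $(q_h,\mu_h)$ inside $\Lambda_h$ so that the \emph{discrete} inequality is the one used, and must not attempt to also apply the continuous inequality, which is unavailable on discrete arguments since $\Lambda_h\not\subset\Lambda$.
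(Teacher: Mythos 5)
Your proposal is correct and follows essentially the same route as the paper: the same initial splitting of $a({\bf u}-{\bf u}_h,{\bf u}-{\bf u}_h)$, the same use of the two first equations to convert $a({\bf u}-{\bf u}_h,{\bf v}_h-{\bf u}_h)$ into $b$-terms, the same algebraic regrouping around $(q_h,\mu_h)$, and the same single application of the discrete inequality to discard $b((q_h,\mu_h)-(p_h,\lambda_h),{\bf u}_h)\le 0$. The only difference is presentational (you make the split ${\bf v}_h-{\bf u}_h=({\bf v}_h-{\bf u})+({\bf u}-{\bf u}_h)$ explicit before regrouping, while the paper writes the final identity directly), and your observation that the continuous inequality is never used is accurate.
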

%
\begin{proof}
 Let ${\bf v}_h$ be an element of ${\bf V}_h$. It follows that:
$$
a({\bf u}-{\bf u}_h,{\bf u}-{\bf u}_h)=a({\bf u}-{\bf u}_h,{\bf u}-{\bf v}_h)+a({\bf u}-{\bf u}_h,{\bf v}_h-{\bf u}_h).
$$
Using the first equations of (\ref{pb_mix_3cham_vect}) and of (\ref{pb_mix_3cham_vect_disc}), this gives :
%
\begin{eqnarray*}
\begin{array}{rcl}
a({\bf u}-{\bf u}_h,{\bf v}_h-{\bf u}_h)	&=&	a({\bf u},{\bf v}_h-{\bf u}_h)-a({\bf u}_h,{\bf v}_h-{\bf u}_h),\\\\				
						&=&	L({\bf v}_h-{\bf u}_h) -b((p,\lambda),{\bf v}_h-{\bf u}_h) - L({\bf v}_h-{\bf u}_h) + b((p_h,\lambda_h),{\bf v}_h-{\bf u}_h),\\\\
						&=&	b((p,\lambda),{\bf u}_h-{\bf v}_h) +b((p_h,\lambda_h),{\bf v}_h-{\bf u}_h).\\\\ 
\end{array}
\end{eqnarray*}
%
Then we deduce
$$
a({\bf u}-{\bf u}_h,{\bf u}-{\bf u}_h)=a({\bf u}-{\bf u}_h,{\bf u}-{\bf v}_h)+b((p,\lambda),{\bf u}_h-{\bf v}_h) +b((p_h,\lambda_h),{\bf v}_h-{\bf u}_h).
$$
Finally we have 

\begin{eqnarray*}
\begin{array}{rcl}
a({\bf u}-{\bf u}_h,{\bf u}-{\bf u}_h)	&=&	a({\bf u}-{\bf u}_h,{\bf u}-{\bf v}_h)+b((p,\lambda)-(q_h,\mu_h),{\bf u}_h-{\bf u})+b((p,\lambda)-(p_h,\lambda_h),{\bf u}-{\bf v}_h)\\\\				
					&&	+b((p,\lambda)-(q_h,\mu_h),{\bf u}) + b((p_h,\lambda_h)-(p,\lambda),{\bf u})\\\\
					&&	+ b((q_h,\mu_h)-(p_h,\lambda_h),{\bf u}_h).
\end{array}
\end{eqnarray*}
%

But according to (\ref{pb_mix_3cham_vect_disc}), $b((q_h,\mu_h)-(p_h,\lambda_h),{\bf u}_h) \leq 0$ for all $(q_h,\mu_h) \in \Lambda_h$. This ends the proof of the lemma.
%
%
%
%
\end{proof}
%

We now derive an upper bound of the terms involved in (\ref{eq_lem_intro}).
%
\begin{lemma}
\label{lem_estim_u2}
 Let $\left({\bf u}, p, \lambda \right)$ and $\left({\bf u}_h, p_h, \lambda_h \right)$ be solutions to (\ref{pb_mix_3cham_vect}), (\ref{pb_mix_3cham_vect_disc}) respectively. Suppose that ${\bf u}\in {\bf H}^2(\Omega)$ and $p\in H^1(\Omega)$. Then

%
\begin{equation}
\label{estim_u-uh2}
\displaystyle ||{\bf u}-{\bf u}_h||_1^2 \leq C({\bf u},p,g) \left( h||\lambda-\lambda_h||_{-\frac{1}{2}} + h^{\frac{3}{2}}\right)  
\end{equation}
where $C({\bf u},p,g)$ is a positive constant depending only on $||{\bf u}||_2$, $||p||_1$ and $||g||_{L^2(\Gamma)}$.
\end{lemma}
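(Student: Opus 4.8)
The plan is to combine the $V$-ellipticity of $a(\cdot,\cdot)$, which gives $\alpha\,||{\bf u}-{\bf u}_h||_1^2\le a({\bf u}-{\bf u}_h,{\bf u}-{\bf u}_h)$, with the abstract estimate (\ref{eq_lem_intro}) of Lemma \ref{lemme_intro}, and then to bound its five terms one by one for a judicious test triple $({\bf v}_h,q_h,\mu_h)\in{\bf V}_h\times\Lambda_h$. I would take ${\bf v}_h=r_h{\bf u}$, the mini-element Fortin interpolant (nodal $P_1$ interpolant corrected by an interior bubble), so that $(q_h,div({\bf u}-r_h{\bf u}))=0$ for all $q_h\in\mathbb{L}_h$, $||{\bf u}-r_h{\bf u}||_1\le Ch||{\bf u}||_2$, and---since the bubble part is interior---the tangential trace $({\bf u}-r_h{\bf u})_t$ on $\Gamma$ coincides with that of the $P_1$ interpolant, giving $||({\bf u}-r_h{\bf u})_t||_{\frac{1}{2},\Gamma}\le Ch||{\bf u}||_2$; for $q_h$ the $L^2$-projection $P_hp$ of $p$ onto $\mathbb{L}_h$; and for $\mu_h$ an $L^2(\Gamma)$-projection-type approximation of $\lambda$ in $\mathcal{Q}_h$. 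I would first record that, under ${\bf u}\in{\bf H}^2(\Omega)$, $p\in H^1(\Omega)$, the multiplier $\lambda$ is the tangential component of the normal stress $(2\nu\varepsilon({\bf u})-p\,\underline{\underline{\delta}}){\bf n}$, hence $\lambda\in H^{\frac{1}{2}}(\Gamma)$ with $||\lambda||_{\frac{1}{2},\Gamma}\le C(||{\bf u}||_2+||p||_1)$; this is what makes $C({\bf u},p,g)$ depend only on $||{\bf u}||_2$, $||p||_1$ and $||g||_{L^2(\Gamma)}$.

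The ``approximation'' terms are then routine. Splitting each $b$ into its divergence part $-(\cdot,div\cdot)$ and its boundary part $\langle\cdot,\cdot_t\rangle$: continuity of $a$ bounds $a({\bf u}-{\bf u}_h,{\bf u}-r_h{\bf u})$ by $Ch||{\bf u}||_2\,||{\bf u}-{\bf u}_h||_1$; the term $b((p,\lambda)-(q_h,\mu_h),{\bf u}_h-{\bf u})$ splits into $||p-P_hp||_0\,C||{\bf u}-{\bf u}_h||_1\le Ch||p||_1||{\bf u}-{\bf u}_h||_1$ and $||\lambda-\mu_h||_{-\frac{1}{2}}\,C||{\bf u}-{\bf u}_h||_1\le Ch||\lambda||_{\frac{1}{2}}||{\bf u}-{\bf u}_h||_1$, using the trace bound $||{\bf w}_t||_{\frac{1}{2},\Gamma}\le C||{\bf w}||_1$ and $||\lambda-\mu_h||_{-\frac{1}{2}}\le Ch||\lambda||_{\frac{1}{2}}$. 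In $b((p,\lambda)-(p_h,\lambda_h),{\bf u}-r_h{\bf u})$ the Fortin property annihilates the genuine pressure error---$(p_h,div({\bf u}-r_h{\bf u}))=0$ and $-(p-P_hp,div({\bf u}-r_h{\bf u}))=O(h^2)$---while its boundary part is exactly $\langle\lambda-\lambda_h,({\bf u}-r_h{\bf u})_t\rangle\le||\lambda-\lambda_h||_{-\frac{1}{2}}\,Ch||{\bf u}||_2$, which is the source of the $h\,||\lambda-\lambda_h||_{-\frac{1}{2}}$ contribution in (\ref{estim_u-uh2}).

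The delicate part is the consistency contribution from the remaining two $b$-terms. Because $div\,{\bf u}=0$ their divergence parts vanish and they collapse to $b((p_h,\lambda_h)-(q_h,\mu_h),{\bf u})=\langle\lambda_h-\mu_h,{\bf u}_t\rangle$ with $\lambda_h,\mu_h\in\mathcal{Q}_h$, where the nonconformity $\mathcal{Q}_h\not\subset\mathcal{Q}$ bites. From the second line of (\ref{pb_mix_3cham_vect}) with $div\,{\bf u}=0$ one has the continuous complementarity $\langle\lambda,{\bf u}_t\rangle=\sup_{\mu\in\mathcal{Q}}\langle\mu,{\bf u}_t\rangle=j({\bf u})$, so I insert $\lambda$ and write $\langle\lambda_h-\mu_h,{\bf u}_t\rangle=(\langle\lambda_h,{\bf u}_t\rangle-j({\bf u}))+(j({\bf u})-\langle\mu_h,{\bf u}_t\rangle)$. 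For the first bracket, with $\pi_h{\bf u}_t\in\mathcal{W}_h$ the $L^2(\Gamma)$-projection I use the defining inequality of $\mathcal{Q}_h$, $\langle\lambda_h,\pi_h{\bf u}_t\rangle\le\int_\Gamma g|\pi_h{\bf u}_t|$, together with the orthogonality $\langle\lambda_h,{\bf u}_t-\pi_h{\bf u}_t\rangle=0$ ($\lambda_h\in\mathcal{W}_h$), whence $\langle\lambda_h,{\bf u}_t\rangle-j({\bf u})\le\int_\Gamma g(|\pi_h{\bf u}_t|-|{\bf u}_t|)\le||g||_{L^2(\Gamma)}\,||{\bf u}_t-\pi_h{\bf u}_t||_{0,\Gamma}$; since ${\bf u}_t|_\Gamma\in H^{\frac{3}{2}}(\Gamma)$ the sharp $L^2(\Gamma)$-estimate $||{\bf u}_t-\pi_h{\bf u}_t||_{0,\Gamma}\le Ch^{\frac{3}{2}}||{\bf u}||_2$ produces precisely the $h^{\frac{3}{2}}$ term. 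The second bracket $j({\bf u})-\langle\mu_h,{\bf u}_t\rangle=\langle\lambda-\mu_h,{\bf u}_t\rangle$ is a pure multiplier-approximation term which, exploiting the $\mathcal{Q}_h$-approximation of $\lambda$ by $\mu_h$ and orthogonality against $\mathcal{W}_h$ (up to the contact-set mismatch), is again $O(h^{\frac{3}{2}})$ or better. Reconciling the discrete sign conditions of $\mathcal{Q}_h$ with the continuous complementarity so that the $j({\bf u})$ terms cancel, and extracting exactly the power $h^{3/2}$ from this boundary $L^2$-approximation, is the \emph{main obstacle}.

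Finally I would collect the contributions into three groups: those carrying a factor $||{\bf u}-{\bf u}_h||_1$ (from the first and second $b$-terms), each of the form $Ch(||{\bf u}||_2+||p||_1+||\lambda||_{\frac{1}{2}})||{\bf u}-{\bf u}_h||_1$, to which Young's inequality $Xh||{\bf u}-{\bf u}_h||_1\le\eta||{\bf u}-{\bf u}_h||_1^2+C_\eta X^2h^2$ is applied with $\eta$ small enough to absorb $\eta||{\bf u}-{\bf u}_h||_1^2$ into the left-hand side; the kept boundary term $Ch||{\bf u}||_2\,||\lambda-\lambda_h||_{-\frac{1}{2}}$; and the $O(h^{3/2})$ consistency remainder together with the $O(h^2)$ pressure and multiplier remainders. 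Using $||\lambda||_{\frac{1}{2}}\le C(||{\bf u}||_2+||p||_1)$ and $h^2\le h^{\frac{3}{2}}$ for $h\le 1$, what survives on the right is exactly $C({\bf u},p,g)(h||\lambda-\lambda_h||_{-\frac{1}{2}}+h^{\frac{3}{2}})$, which is (\ref{estim_u-uh2}).
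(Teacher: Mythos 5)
Your proof is correct and follows essentially the same route as the paper: the decomposition of Lemma \ref{lemme_intro}, term-by-term bounds built on the interpolation/projection estimates (\ref{lag_interp})--(\ref{proj_approx}), the complementarity $\langle\lambda,{\bf u}_t\rangle=j({\bf u})$ together with the defining inequality of $\mathcal{Q}_h$ to produce the $h^{3/2}$ consistency term, and a final absorption via Young's inequality. The only local differences are cosmetic: you let the $h\,\|\lambda-\lambda_h\|_{-\frac{1}{2}}$ factor arise from $b((p,\lambda)-(p_h,\lambda_h),{\bf u}-{\bf v}_h)$ and suppress it in the consistency term by pairing $\lambda_h$ with the orthogonal projection $\pi_h{\bf u}_t$, whereas the paper removes it from that third term via the discrete inf-sup condition (\ref{inf_sup_disc_vect2}) and instead picks it up in $b((p_h,\lambda_h)-(p,\lambda),{\bf u})$ through the Lagrange interpolant $i_h{\bf u}_t$; both yield the same right-hand side.
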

%
\begin{proof}
 Using Lemma \ref{lemme_intro}, we will show that there exists $({\bf v}_h, (q_h,\mu_h))\in {\bf V}_h\times \Lambda_h$ satisfying :
\begin{eqnarray}
\label{proof_lemm_a5}
\left\lbrace 
 \begin{array}{lcl}
\displaystyle a({\bf u}-{\bf u}_h, {\bf u}-{\bf v}_h)	&\leq&	C({\bf u}) h ||{\bf u}-{\bf u}_h||_1,\\\\
%
\displaystyle b((p,\lambda)-(q_h,\mu_h),{\bf u}_h-{\bf u})	&\leq&	C({\bf u},p) h ||{\bf u}-{\bf u}_h||_1,\\\\
%
\displaystyle b((p_h,\lambda_h)-(p,\lambda),{\bf u}-{\bf v}_h)		&\leq&	Ch \left\lbrace ||{\bf u}-{\bf u}_h||_1+ C(p)h + C({\bf u})h\right\rbrace\\\\
\displaystyle b((p,\lambda)-(q_h,\mu_h),{\bf u})		&\leq&	C({\bf u})^2 h^2\\\\
\displaystyle b((p_h,\lambda_h)-(p,\lambda),{\bf u})		&\leq&	C({\bf u})\left(  h ||\lambda-\lambda_h||_{-\frac{1}{2}}+C({\bf u}) h^{\frac{3}{2}}+C({\bf g}) h^{\frac{3}{2}}\right) \\\\
\end{array}
\right.
\end{eqnarray}

Before proving these estimates,  we first have to recall some useful results. Let $\mathcal{I}_h$, $\mathfrak{J}_h$ and $i_h$ be the Lagrange interpolation operators on ${\bf V}_h$, $\mathbb{L}_h$ and $\mathcal{W}_h$ respectively. From \cite{ciarlet80}, there exists a positive constant $C$ such that $\forall {\bf v}\in {\bf H}^2(\Omega)$, $\forall p\in \mathbb{L}_h$ and $\forall  \psi \in H^{\frac{3}{2}}(\Gamma)$:
\begin{equation}
\label{lag_interp}
 ||{\bf v}-\mathcal{I}_h{\bf v}||_1\leq C h ||{\bf v}||_2,  \qquad ||p-\mathfrak{J}_hp||_0\leq C h ||p||_1, \qquad  ||\psi-i_h\psi||_{0,\Gamma}\leq C h^{\frac{3}{2}} ||\psi||_{\frac{3}{2},\Gamma}.
\end{equation}
Let $\pi_h$ be the projection operator from $(L^2(\Gamma))^{d-1}$ on $\mathcal{W}_h$ defined by:
\begin{equation}
\label{proj_gamma}
\displaystyle \pi_h\psi\in \mathcal{W}_h, \qquad\qquad  \int_\Gamma\left(\pi_h\psi-\psi\right)\mu_h=0 \quad \forall \, \mu_h\in \mathcal{W}_h.
\end{equation}
It holds $\forall \tau \in [0,1] \mbox{ and } \forall \nu \in [0, \tau+\frac{1}{2}]$ one has:
\begin{equation}
\label{proj_approx}
\forall \psi\in {\bf H}^{\frac{1}{2}+\tau} \qquad\quad %
h^{-\frac{1}{2}}||\psi-\pi_h\psi||_{-\frac{1}{2},\Gamma}+h^{\nu}||\psi-\pi_h\psi||_{\nu,\Gamma}\leq C h^{\tau+\frac{1}{2}}.
\end{equation}
Let $\Pi_h$ be the projection operator from  $L^2(\Omega)$ on $\mathbb{L}_h$ defined by:
\begin{equation}
\label{proj_l2}
\displaystyle \Pi_hq\in \mathbb{L}_h, \qquad\qquad  \int_\Omega\left(\Pi_hq-q\right)s_h=0 \quad \forall \, s_h\in \mathbb{L}_h.
\end{equation}
Finally, let us note the trace theorem implies that
\begin{equation}
\label{trace_estim}
 ||\lambda||_{\frac{1}{2},\Gamma}\leq C||{\bf u}||_2
\end{equation}

\noindent \textit{\textbf{(i)}} The first term is evaluted by using the continuity of $a(\cdot, \cdot)$ and the property (\ref{lag_interp})
\begin{eqnarray*}
 \begin{array}{rcl}
a({\bf u}-{\bf u}_h,{\bf u}-{\bf v}_h)	&\leq&	C ||{\bf u}-{\bf u}_h||_1\, ||{\bf u}-{\bf v}_h||_1 \quad \forall {\bf v}_h\in {\bf V}_h,\\\\
					&\leq&	C \inf\limits_{{\bf v}_h\in {\bf V}_h}\left\lbrace ||{\bf u}-{\bf v}_h||_1\right\rbrace  ||{\bf u}-{\bf u}_h||_1,\\\\
					&\leq& C ||{\bf u}-\mathcal{I}_h{\bf u}||_1 ||{\bf u}-{\bf u}_h||_1,\\\\
					&\leq& C({\bf u}) h ||{\bf u}-{\bf u}_h||_1.
\end{array}
\end{eqnarray*}

\noindent \textit{\textbf{(ii)}} Using (\ref{lag_interp}) and (\ref{trace_estim}) we have
\begin{eqnarray}
\label{terme2_u-uh}
 \begin{array}{rcl}
b((p,\lambda)-(q_h,\mu_h),{\bf u}_h-{\bf u})	&=&	-(p-q_h, div({\bf u}_h-{\bf u})) + \left\langle \lambda-\mu_h,{\bf u}_{ht}-{\bf u}_{t} \right\rangle, \\\\
						&\leq&	C\left\lbrace ||p-q_h||_{0}+||\lambda-\mu_h||_{-\frac{1}{2}}\right\rbrace ||{\bf u}-{\bf u}_h||_1 \quad \forall (q_h,\mu_h)\in \Lambda_h,\\\\
						&\leq&	C\left\lbrace \inf\limits_{q_h\in \mathbb{L}_h} ||p-q_h||_{0}+\inf\limits_{\mu_h\in \mathcal{Q}_h}||\lambda-\mu_h||_{-\frac{1}{2}}\right\rbrace ||{\bf u}-{\bf u}_h||_1,\\\\
						&\leq& C({\bf u},p)h ||{\bf u}-{\bf u}_h||_1.
\end{array}
\end{eqnarray}

\noindent \textit{\textbf{(iii)}} Further
%
\begin{eqnarray}
\label{terme3_u-uh}
 \begin{array}{rcl}
b((p,\lambda)-(p_h,\lambda_h),{\bf u}-{\bf v}_h)	&=&	-(p-p_h, div({\bf u}-{\bf v}_h)) + \left\langle \lambda-\lambda_h,{\bf u}_t-{\bf v}_{ht} \right\rangle, \\\\
						&\leq&	C\left\lbrace ||p-p_h||_{0}+||\lambda-\lambda_h||_{-\frac{1}{2}}\right\rbrace ||{\bf u}-{\bf v}_h||_1 \quad \forall {\bf v}_h\in{\bf V}_h,\\\\
%
%
					&\leq& C \left\lbrace ||p-q_h||_0+||\lambda-\mu_h||_{-\frac{1}{2},\Gamma}\right.\\\\
					&&	\left. +||q_h-p_h||_0+||\lambda_h-\mu_h||_{-\frac{1}{2},\Gamma} \right\rbrace ||{\bf u}-{\bf v}_h||_1.\\\\
\end{array}
\end{eqnarray}
%

Since $(p_h-q_h, \lambda_h-\mu_h)\in \mathcal{M}_h$ then it follows from the discrete {\it inf-sup} condition (\ref{inf_sup_disc_vect2}):
\begin{eqnarray*}
 \begin{array}{lcl}
 \beta \left(||p_h-q_h||_0+||\lambda_h-\mu_h||_{-\frac{1}{2},\Gamma}\right)	&\leq&	\displaystyle\sup\limits_{{\bf v}_h\in {\bf V}_h} \frac{b\left(\left(p_h-q_h,\lambda_h-\mu_h\right),{\bf v}_h\right)}{||{\bf v}_h||_1},\\\\
				&\leq&	\displaystyle\sup\limits_{{\bf v}_h\in {\bf V}_h}\left\lbrace  \frac{b\left(\left(p_h-p,\lambda_h-\lambda\right),{\bf v}_h\right)}{||{\bf v}_h||_1} +  \frac{b\left(\left(p-q_h,\lambda-\mu_h\right),{\bf v}_h\right)}{||{\bf v}_h||_1}\right\rbrace, \\\\
				&\leq&	\displaystyle\sup\limits_{{\bf v}_h\in {\bf V}_h}\frac{a({\bf u}-{\bf u}_h,{\bf v}_h)}{||{\bf v}_h||_1}+ ||p-q_h||_0+C||\lambda-\mu_h||_{-\frac{1}{2},\Gamma}.
\end{array}
\end{eqnarray*}
Hence : $\forall \left(q_h,\mu_h\right)\in \mathcal{M}_h$
\begin{equation}
\label{infsup_resul2}
 ||p_h-q_h||_0+||\lambda_h-\mu_h||_{-\frac{1}{2},\Gamma}\leq C \left(||{\bf u}-{\bf u}_h||_1+||p-q_h||_0+||\lambda-\mu_h||_{-\frac{1}{2},\Gamma}\right).
\end{equation}
%
Then combining the last inequality of (\ref{terme2_u-uh}) and (\ref{infsup_resul2}) and using property (\ref{lag_interp}) we obtain
\begin{eqnarray*}
 \begin{array}{lcl}
 b((p,\lambda)-(p_h,\lambda_h),{\bf u}-{\bf v}_h)	&\leq&	C \left\lbrace ||{\bf u}-{\bf u}_h||_1+ \inf\limits_{q_h\in \mathbb{L}_h}||p-q_h||_0\right.\\
							&&	\left.+ \inf\limits_{\mu_h\in \mathcal{Q}_h}||\lambda-\mu_h||_{-\frac{1}{2}}\right\rbrace\inf\limits_{{\bf v}_h\in {\bf V}_h}||{\bf u}-{\bf v}_h||_1,\\\\
							&\leq&	Ch \left\lbrace ||{\bf u}-{\bf u}_h||_1+ C(p)h + C({\bf u})h\right\rbrace
\end{array}
\end{eqnarray*}
%

\noindent \textit{\textbf{(iv)}} To estimate this term we invoke the definition of the $L^2$-projection operator:
%
\begin{eqnarray}
\label{terme4_u-uh}
 \begin{array}{rcl}
b((p,\lambda)-(q_h,\mu_h),{\bf u})	&=&\displaystyle	-(p-q_h, div{\bf u}) + \int_{\Gamma}(\lambda-\mu_h){\bf u}_t d\Gamma \quad \forall \mu_h\in \mathcal{Q}_h, \\\\
					&=&\displaystyle	\int_{\Gamma}(\lambda-\pi_h \lambda){\bf u}_t d\Gamma \quad \mbox{ for } \mu_h=\pi_h\lambda, \\\\
					&=&\displaystyle	\int_{\Gamma}(\lambda-\pi_h \lambda){\bf u}_t d\Gamma - \int_{\Gamma}(\lambda-\pi_h \lambda)\pi_h {\bf u}_t d\Gamma, \\\\	
					&=&\displaystyle	\int_{\Gamma}(\lambda-\pi_h \lambda)({\bf u}_t-\pi_h {\bf u}_t) d\Gamma, \\\\
					&\leq&||\lambda-\pi_h \lambda||_{L^2(\Gamma)} ||{\bf u}_t-\pi_h {\bf u}_t||_{L^2(\Gamma)},\\\\

					&\leq& C({u})^2h^2.
\end{array}
\end{eqnarray}
%

\textit{\textbf{(v)}} Now we shall estimate the fifth term of (\ref{proof_lemm_a5}) using (\ref{equiv_tresca}) and the definition of $\mathcal{Q}_h$
\begin{eqnarray*}
 \begin{array}{rcl}
b((p_h,\lambda_h)-(p,\lambda),{\bf u})	&=&\displaystyle	-(p_h-p, div{\bf u}) + \int_{\Gamma}(\lambda_h-\lambda){\bf u}_t d\Gamma, \\\\
					&=&\displaystyle	\int_{\Gamma}(\lambda_h-\lambda){\bf u}_t d\Gamma, \\\\
					&=&\displaystyle	\int_\Gamma \left(\lambda_h-\lambda\right)\left({\bf u}_t-i_h({\bf u}_t)\right) +  \int_\Gamma \left(\lambda_h-\lambda\right)i_h({\bf u}_t)   +   \int_{\Gamma}\left(\lambda {\bf u}_{t}-g|{\bf u}_{t}|\right),\\\\
				&\leq&	\displaystyle \int_\Gamma \left(\lambda_h-\lambda\right)\left({\bf u}_t-i_h({\bf u}_t)\right) + \int_\Gamma g\left(|i_h({\bf u}_t)|-|{\bf u}_t|\right)   + \int_\Gamma \lambda \left({\bf u}_t-i_h({\bf u}_t)\right),\\\\
				&\leq&	\displaystyle \int_\Gamma \left(\lambda_h-\lambda\right) \left({\bf u}_t-i_h({\bf u}_t)\right) + \int_\Gamma g|i_h({\bf u}_t)-{\bf u}_t|   + \int_\Gamma \lambda\left({\bf u}_t-i_h({\bf u}_t)\right),\\\\
				&\leq&	||\lambda_h-\lambda||_{-\frac{1}{2},\Gamma} ||{\bf u}_t-i_h({\bf u}_t)||_{\frac{1}{2},\Gamma}  +  ||g||_{0,\Gamma}||{\bf u}_t-i_h({\bf u}_t)||_{0,\Gamma}\\
				& &+   ||\lambda||_{0,\Gamma}||{\bf u}_t-i_h({\bf u}_t)||_{0,\Gamma},\\\\
				&\leq&	C({\bf u})h||\lambda-\lambda_h||_{-\frac{1}{2},\Gamma}  +  C({\bf u})^2h^{\frac{3}{2}}+ C(g)C({\bf u})h^{\frac{3}{2}},\\\\
				&\leq&	C({\bf u},g) \left\lbrace h||\lambda-\lambda_h||_{-\frac{1}{2},\Gamma} +h^{\frac{3}{2}}\right\rbrace.
\end{array}
\end{eqnarray*}
Assembling the estimates \textit{\textbf{(i)}}-\textit{\textbf{(v)}} in the Lemma \ref{lem_estim_u2} and using the V-ellipticity of the bilinear form $a(\cdot, \cdot)$, we finally arrive at the following estimate
$$
||{\bf u}-{\bf u}_h||_1^2 \leq C({\bf u},p) \left( h||\lambda-\lambda_h||_{-\frac{1}{2}}+h||{\bf u}-{\bf u}_h||\right) + C({\bf u},g)h^{\frac{3}{2}},
$$
then using the Young inequality we can write for every constant $\beta>0$
$$
C({\bf u},p)h||{\bf u}-{\bf u}_h||_1\leq C({\bf u},p)\left(\beta h^2+ \frac{1}{\beta}||{\bf u}-{\bf u}_h||_1^2\right).
$$
Taking $\beta$ such that $\displaystyle\frac{C({\bf u},p)}{\beta}<1$ then leads to the desired result.
\end{proof}
%
\begin{lemma}
\label{lem_estim_p_lambda}
 Let $\left({\bf u}, p, \lambda \right)$ and $\left({\bf u}_h, p_h, \lambda_h \right)$ be solutions to (\ref{pb_mix_3cham_vect}), (\ref{pb_mix_3cham_vect_disc}) respectively. Suppose that ${\bf u}\in {\bf H}^2(\Omega)$ and $p\in H^1(\Omega)$. Then
%
\begin{equation}
\label{estim_p-ph__lam_lamh}
\displaystyle ||p-p_h||_0 +||\lambda-\lambda_h||_{-\frac{1}{2}} \leq C({\bf u},p)\left\lbrace h+||{\bf u}-{\bf u}_h||_1\right\rbrace, 
\end{equation}
where $C({\bf u},p)$ is a positive constant depending only on $||{\bf u}||_2$ and $||p||_1$.
\end{lemma}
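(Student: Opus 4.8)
The plan is to obtain this bound as a standard consequence of the discrete inf-sup stability, by combining a triangle inequality with the inf-sup-derived estimate (\ref{infsup_resul2}) already established in the proof of Lemma \ref{lem_estim_u2}; the only genuinely new ingredient is the choice of good approximants for $p$ and $\lambda$ together with their interpolation and projection error bounds.

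First I would fix an arbitrary pair $(q_h,\mu_h)\in\mathcal{M}_h$ and split each error by the triangle inequality into a pure approximation part, $||p-q_h||_0+||\lambda-\mu_h||_{-\frac{1}{2}}$, and a fully discrete part, $||q_h-p_h||_0+||\mu_h-\lambda_h||_{-\frac{1}{2}}$. The discrete part is controlled by (\ref{infsup_resul2}), namely
\begin{equation*}
||q_h-p_h||_0+||\mu_h-\lambda_h||_{-\frac{1}{2}}\le C\left(||\mathbf{u}-\mathbf{u}_h||_1+||p-q_h||_0+||\lambda-\mu_h||_{-\frac{1}{2}}\right).
\end{equation*}
Recall that (\ref{infsup_resul2}) is itself obtained by applying the discrete inf-sup condition (\ref{inf_sup_disc_vect2}) to $(q_h-p_h,\mu_h-\lambda_h)\in\mathcal{M}_h$ and then using the Galerkin orthogonality $b((p-p_h,\lambda-\lambda_h),\mathbf{v}_h)=a(\mathbf{u}-\mathbf{u}_h,\mathbf{v}_h)$ obtained by subtracting the first equations of (\ref{pb_mix_3cham_vect}) and (\ref{pb_mix_3cham_vect_disc}), together with the continuity of $a(\cdot,\cdot)$ and $b(\cdot,\cdot)$. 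Adding the approximation part then gives, for every $(q_h,\mu_h)\in\mathcal{M}_h$,
\begin{equation*}
||p-p_h||_0+||\lambda-\lambda_h||_{-\frac{1}{2}}\le C\left(||\mathbf{u}-\mathbf{u}_h||_1+||p-q_h||_0+||\lambda-\mu_h||_{-\frac{1}{2}}\right).
\end{equation*}

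Finally, I would optimise over the approximants by taking $q_h=\mathfrak{J}_h p$ and $\mu_h=\pi_h\lambda$: the interpolation bound (\ref{lag_interp}) gives $||p-\mathfrak{J}_h p||_0\le Ch||p||_1$, while the projection estimate (\ref{proj_approx}) used at $\tau=0$ gives $||\lambda-\pi_h\lambda||_{-\frac{1}{2}}\le Ch$, the constant there being controlled through the trace estimate (\ref{trace_estim}) by $||\lambda||_{\frac{1}{2},\Gamma}\le C||\mathbf{u}||_2$. Substituting these two bounds yields the announced term $C(\mathbf{u},p)h$ and completes the proof.

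The step demanding the most care is the treatment of the multiplier $\lambda$. First, one has to check that $\mu_h=\pi_h\lambda\in\mathcal{W}_h$ is admissible in the inf-sup argument, which it is, since (\ref{infsup_resul2}) and the discrete inf-sup condition (\ref{inf_sup_disc_vect2}) are posed over the full space $\mathcal{M}_h=\mathbb{L}_h\times\mathcal{W}_h$ rather than over the constraint set $\Lambda_h$; hence the nonconformity of $\mathcal{Q}_h$ as an external approximation of $\mathcal{Q}$ plays no role here. Second, the regularity index in the negative-norm projection bound must be chosen correctly: since the trace of $\mathbf{u}\in\mathbf{H}^2(\Omega)$ only guarantees $\lambda\in H^{\frac{1}{2}}(\Gamma)$, one invokes (\ref{proj_approx}) at $\tau=0$, which nevertheless still delivers the full order $h$ in the $-\frac{1}{2}$ norm.
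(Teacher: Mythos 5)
Your proposal is correct and follows essentially the same route as the paper, whose (very terse) proof likewise combines the triangle inequality implicit in (\ref{terme3_u-uh}) with the inf-sup consequence (\ref{infsup_resul2}) and the approximation bounds (\ref{lag_interp}), (\ref{proj_approx}) for the choices $q_h=\mathfrak{J}_h p$, $\mu_h=\pi_h\lambda$. Your additional remarks on the admissibility of $\pi_h\lambda\in\mathcal{W}_h$ (so the nonconformity of $\mathcal{Q}_h$ is irrelevant here) and on taking $\tau=0$ in (\ref{proj_approx}) are accurate and usefully fill in details the paper leaves implicit.
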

%
%
\begin{proof}
Using  (\ref{terme3_u-uh}) and (\ref{infsup_resul2}) we get the disired result.
\end{proof}
%
\begin{theorem}
\label{lem_estim_u_p_lam}
Let $\left({\bf u}, p, \lambda \right)$ and $\left({\bf u}_h, p_h, \lambda_h \right)$ be solutions to (\ref{pb_mix_3cham_vect}), (\ref{pb_mix_3cham_vect_disc}) respectively. Suppose that ${\bf u}\in {\bf H}^2(\Omega)$ and $p\in H^1(\Omega)$. Then
$$
\displaystyle||{\bf u}-{\bf u}_h||_1+||p-p_h||_0 +||\lambda-\lambda_h||_{-\frac{1}{2}}\leq C({\bf u},p,g)h^{\frac{3}{4}},
$$ 
where $C({\bf u},g)$ is a where $C({\bf u},p,g)$ is a positive constant depending only on $||{\bf u}||_2$, $||p||_1$ and $||g||_{L^2(\Gamma)}$.
\end{theorem}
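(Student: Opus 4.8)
The plan is to close the loop between the two preceding lemmas by substitution, and then use a Young-inequality absorption to resolve the resulting quadratic inequality in the velocity error. Abbreviate $e_u = ||{\bf u}-{\bf u}_h||_1$ and $e_\lambda = ||\lambda-\lambda_h||_{-\frac{1}{2}}$. Lemma \ref{lem_estim_u2} supplies
$$e_u^2 \leq C({\bf u},p,g)\left(h\,e_\lambda + h^{\frac{3}{2}}\right),$$
while Lemma \ref{lem_estim_p_lambda}, whose left-hand side is a sum of nonnegative terms, yields in particular $e_\lambda \leq C({\bf u},p)\left(h + e_u\right)$.

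First I would insert this bound for $e_\lambda$ into the velocity estimate, which after collecting all prefactors into a single constant $C = C({\bf u},p,g)$ gives
$$e_u^2 \leq C\left(h^2 + h\,e_u + h^{\frac{3}{2}}\right).$$
The only term that cannot be handled directly is the cross product $h\,e_u$; for this I would apply Young's inequality, $C\,h\,e_u \leq \tfrac{1}{2}e_u^2 + \tfrac{C^2}{2}h^2$, and transfer the $\tfrac{1}{2}e_u^2$ to the left. Using $h^2 \leq h^{\frac{3}{2}}$ for $h \leq 1$ then leaves
$$e_u^2 \leq C\,h^{\frac{3}{2}}, \qquad\text{so}\qquad e_u \leq C({\bf u},p,g)\,h^{\frac{3}{4}}.$$
The exponent $\tfrac{3}{4}$ is dictated by the dominant $h^{\frac{3}{2}}$ contribution, which is precisely the boundary term produced on $\Gamma$ by the nonconforming discretization of $\mathcal{Q}$; taking the square root transfers this rate to the velocity.

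Finally I would feed $e_u \leq C\,h^{\frac{3}{4}}$ back into Lemma \ref{lem_estim_p_lambda} to get
$$||p-p_h||_0 + e_\lambda \leq C\left(h + h^{\frac{3}{4}}\right) \leq C({\bf u},p,g)\,h^{\frac{3}{4}},$$
once more invoking $h \leq h^{\frac{3}{4}}$ for small $h$, and summing the two displays yields the asserted estimate. I do not expect a genuine obstacle: the substantial analytic work is already done in the two lemmas, and the only real step is the Young absorption that converts the coupled pair of inequalities into a standalone bound on $e_u$. The point that warrants care is the bookkeeping of the generic constant, ensuring its dependence collapses to exactly $||{\bf u}||_2$, $||p||_1$ and $||g||_{L^2(\Gamma)}$ as claimed.
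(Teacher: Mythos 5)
Your proposal is correct and follows essentially the same route as the paper's own proof: substitute Lemma \ref{lem_estim_p_lambda} into Lemma \ref{lem_estim_u2}, absorb the cross term $h\,\|{\bf u}-{\bf u}_h\|_1$ by Young's inequality to obtain $\|{\bf u}-{\bf u}_h\|_1\leq C h^{3/4}$, and feed this back into Lemma \ref{lem_estim_p_lambda} for the pressure and multiplier. The only difference is presentational (your explicit tracking of the $h^2\leq h^{3/2}$ comparisons), which the paper leaves implicit.
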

%
%
\begin{proof}
By assembling (\ref{estim_u-uh2}) and (\ref{estim_p-ph__lam_lamh}) we can write:
\begin{eqnarray*}
\begin{array}{rcl}
\displaystyle||{\bf u}-{\bf u}_h||_1^2	&\leq&	C({\bf u},p,g)\left\lbrace h ||\lambda-\lambda_h||_{-\frac{1}{2}}+h^{\frac{3}{2}}\right\rbrace \\\\
					&\leq&	C({\bf u},p,g)h \left\lbrace h+||{\bf u}-{\bf u}_h||_1\right\rbrace +C({\bf u},p,g) h^{\frac{3}{2}}\\\\
					&\leq&	C({\bf u},p,g)h^2+C({\bf u},p,g)h||{\bf u}-{\bf u}_h||_1 + C({\bf u},p,g) h^{\frac{3}{2}}\\\\
\end{array}
\end{eqnarray*}
then using Young's inequality we can  easily write:
$$
\displaystyle||{\bf u}-{\bf u}_h||_1 \leq C({\bf u},p,g)h^{\frac{3}{4}}
$$
so that (\ref{estim_p-ph__lam_lamh}) becomes
$$
||p-p_h||_0 +||\lambda-\lambda_h||_{-\frac{1}{2}}\leq C({\bf u},p,g)h^{\frac{3}{4}}
$$
wich leads to the desired result.

\end{proof}
%
\section{Numerical simulations}

We briefly describe the numerical resolution of the 2D Stokes problem with boundary conditions of Tresca friction type. For this aim, the augmented lagrangian method \cite{fortin_glow} will be used.

The minimization problem (\ref{pb_minimi}) is replaced by :
%
\begin{eqnarray}
\label{pb_mini_lagr_aug}
\left\{
\begin{array}{l}
\mbox{Find } ({\bf u},\Phi) \in \Pi \mbox{ such that :}\\\\
\displaystyle \Sigma({\bf u},\Phi) \leq \Sigma({\bf v},\varphi) \, \forall \, ({\bf v},\varphi)\in \Pi,
\end{array}
\right.
\end{eqnarray}
where 
$$\Pi=\{({\bf v},\varphi)\in {\bf V}_{div}(\Omega)\times L^2(\Gamma) \mbox{ such that }\varphi={\bf v}_t\},$$
and $\Sigma$ the lagrangian is defined on $\Pi$ by:
$$\displaystyle \forall (\varphi,{\bf v})\in \Pi \qquad \Sigma({\bf v},\varphi)=\frac{1}{2}a({\bf v},{\bf v})-L({\bf v})+j(\varphi).$$
Then, the following saddle-point problem is derived
%
\begin{eqnarray}
\label{pt_selle_aug}
\left\{
\begin{array}{l}
\mbox{Find }({\bf u},\Phi, \lambda) \in \Pi\times L^2(\Gamma)\mbox{ such that:}\\\\
\mathcal{L}_r({\bf u},\varphi,\mu) \leq \mathcal{L}_r({\bf u},\Phi,\lambda) \leq \mathcal{L}_r({\bf v},\Phi,\lambda) \quad  \forall ({\bf v},\varphi,\mu)\in {\bf V}_{div}(\Omega)\times (L^2(\Gamma))^2,
\end{array}
\right.
\end{eqnarray}
%
where
\begin{equation}
\label{fct_lagra_aug}
\displaystyle \mathcal{L}_r({\bf v},\varphi, \mu)=\Sigma({\bf v},\varphi)+\int_\Gamma ({\bf v}_t-\varphi)\mu+\frac{r}{2}||\varphi-{\bf v}_t||^2_{0,\Gamma},
\end{equation}
and we use bloc relaxation Uzawa algorithm, or ALG2 as mentionned in \cite{fortin_glow}, to solve (\ref{pt_selle_aug}). This leads to the following algorithm:
\begin{center}
\begin{boxedminipage}{14cm}

\vspace{0.25cm}
	{\tt
\begin{enumerate}
	\item Initialisation: $\Phi^{-1}$, $\lambda^0$ et $r>0$ fixed.
	\item Repeat until convergence
\begin{eqnarray}
\left\{
\begin{array}{rlll}
 \mbox{Find }\quad {\bf u}^k\in{\bf V} \mbox{ \footnotesize such:}& \forall {\bf v} \in {\bf V}\\\\
\displaystyle a({\bf u}^k,{\bf v})+r ({\bf u}^k_t,{\bf v}_t)_{\Gamma}	=&	L({\bf v})+(r\Phi^{k-1}-\lambda^k,{\bf v}_t)_{\Gamma}\,\\\\ 
div({\bf u}^k)								=&	0
\end{array}
\right.
\end{eqnarray}

---------
\begin{eqnarray}
\Phi^k=
\left\{
\begin{array}{rcll}
\displaystyle \frac{||\lambda^k+r{\bf u}^k_t||-g}{r||\lambda^k+r{\bf u}^k_t||} (\lambda^k+r{\bf u}^k_t) &\mbox{ if }& ||\lambda^k+r{\bf u}^k_t||\geq g\\\\
%
0 &\mbox{unless}
\end{array}
\right.
\end{eqnarray}

---------
\begin{eqnarray}
\begin{array}{lcll}
 \lambda^{k+1}=\lambda^k+\rho_k({\bf u}_t^k-\Phi^{k})
\end{array}
\end{eqnarray}

---------
	\item[]
	\item $\displaystyle \frac{||({\bf u}^k,\Phi^{k})-({\bf u}^{k-1},\Phi^{k-1})||}{||({\bf u}^k,\Phi^{k})||}<\varepsilon$ $\Rightarrow$ End.
\end{enumerate}
}

\vspace{0.25cm}
\end{boxedminipage}
\end{center}
\section*{Remarks}
\begin{itemize}
	\item[$\ast$] It's recommanded in \cite{fortin_glow,glow_tallec} to choose $\rho_k=\rho=r$ to ensure the convergnece of the above algorithm;
	\item[$\ast$] A second issue is how to choose $r$? Numerical tests show that there is an optimal value $r_{opt}$ for which convergence is the fastest. Unfortunately, this result still unprooved.
\end{itemize}
\subsection{Numerical Tests}
A no-slip 2D Stokes solver \cite{jonas_code} is used and Tresca friction boundary conditions were implemented on. $\Omega$ is the square $[0,0.1]^2$, the fluid can slip on $\Gamma=\Gamma_{upper}\cup \Gamma_{lower}=[0,0.1]\times\{0.1\}\cup [0,0.1]\times\{0\}$, the viscosity is taken equal to 0.1 and $10^{-6}$ is choosen as a stopping criterion.
\subsubsection{Test 1:}
If the threshold is never beeing reached then there is no-slip on all parts of the boundary $\partial \Omega$ wich is the case if the solution $({\bf u}, p)$ is that of the Stokes problem with homogeneous Dirichlet boundary conditions.
Logically, the value of $g$ has no effect on the solution of such problem.

\noindent The volume data ${\bf f}$ is adjusted so that the exact solution will be :
%
\begin{eqnarray*}
 \begin{array}{rcl}
	u_1(x,y)	&=&	-cos(20\pi x)sin(20\pi y)+sin(20\pi y)\\
	u_2(x,y)	&=&	-sin(20\pi x)cos(20\pi y)-sin(20\pi y)\\
	p(x,y)		&=&	20\pi(cos(20\pi y)-cos(20\pi x))
 \end{array}
\end{eqnarray*}
%

\noindent As shown in table \ref{tab:maill_param}, error decreases as we consider smaller mesh size.
{\footnotesize
\begin{table}[H]
\begin{center}
\begin{tabular}{|ccc||ccc|}
\hline
$1/h$ &$np$&$nt$& $||{\bf u}-{\bf u}_h||_0$ & $||{\bf u}-{\bf u}_h||_1$ & $||p-p_h||_0$ \\
\hline\hline
700	&	8522	&	16762	&	1.245e-04	&	1.365e-01	&	4.253e-03\\
900	&	14038	&	27714	&	7.446e-05	&	1.056e-01	&	2.992e-03 \\
1100	&	20880	&	41318	&	4.969e-05	&	8.656e-02	&	2.211e-03\\
1300	&	29506 	& 	58490 	& 	3.534e-05 	& 	7.305e-02 	&	1.700e-03\\
1500 	& 	39103 	& 	77604 	& 	2.647e-05 	& 	6.304e-02 	& 	1.451e-03\\
1600 	& 	44756 	& 	88870 	& 	2.306e-05 	& 	5.889e-02 	& 	1.317e-03\\
1700 	& 	50228 	& 	99774 	& 	2.062e-05 	& 	5.554e-02 	& 	1.262e-03\\
1800 	& 	56385 	& 	112048 	& 	1.837e-05 	& 	5.259e-02 	& 	1.170e-03\\
2000 	& 	69068 	& 	137334 	& 	1.514e-05 	& 	4.762e-02 	& 	9.956e-04\\
3000 	&	155610	& 	310018	& 	6.650e-06  	& 	3.159e-02 	& 	6.181e-04\\
\hline
\end{tabular}
\end{center}
\caption{$h$: mesh size, $np$: number of noeuds, $nt$: number of triangles}
\label{tab:maill_param}
\end{table}
}
%
{\footnotesize
\begin{table}[H]
\begin{center}
\begin{tabular}{|c|c|c|c|c|}
\hline
$g$ 	&$||{\bf u}-{\bf u}_h||_0$	& $||{\bf u}-{\bf u}_h||_1$	& $||p-p_h||_0$	& $n_{it}$ \\
\hline\hline
0	&	3.0405e-03		& 9.1986e-02			& 7.4967e-02 	& 26\\
0.015	&	3.0273e-03		& 9.1623e-02			& 7.7007e-02	& 131\\
10	&	3.0251e-03		& 9.1596e-02			& 7.7089e-02	& 135\\
40	&	3.0251e-03		& 9.1596e-02			& 7.7089e-02	& 135\\
\hline
\end{tabular}
\end{center}
\caption{ Effect of $g$ on the approximate solution. $r=10$, $n_{it}$:number of iteration to convergence}
\label{tab:g_effect}
\end{table}
}
\subsubsection{Test 2:}
%
We set $g=0.015$ wich is consistent with experimental values, see \cite{hatzik91} and \cite{leger03}, and we enforce parabolic profil on both $\Gamma_{left}$ and $\Gamma_{right}$:
\begin{eqnarray*}
{\bf u}_l={\bf u}_r=
\left[ 
 \begin{array}{lllll}
  y(1-y)\\
-y(1-y)
 \end{array}
\right] 
\end{eqnarray*}
where ${\bf u}_l={\bf u}|_{\Gamma_{left}}$ and ${\bf u}_r={\bf u}|_{\Gamma_{right}}$.

\noindent We choose this profile to enforce shear stress near the solid wall to reach the threshold without considering a complicated domain geometry. We can easily notice that fluid slips on some regions of $\partial \Omega$ and adheres the other regions, see figures (\ref{ligne_courant}, \ref{zoom_adh_gliss}).

{\footnotesize
\begin{figure}[H]
	\centering{\includegraphics[scale=0.30]{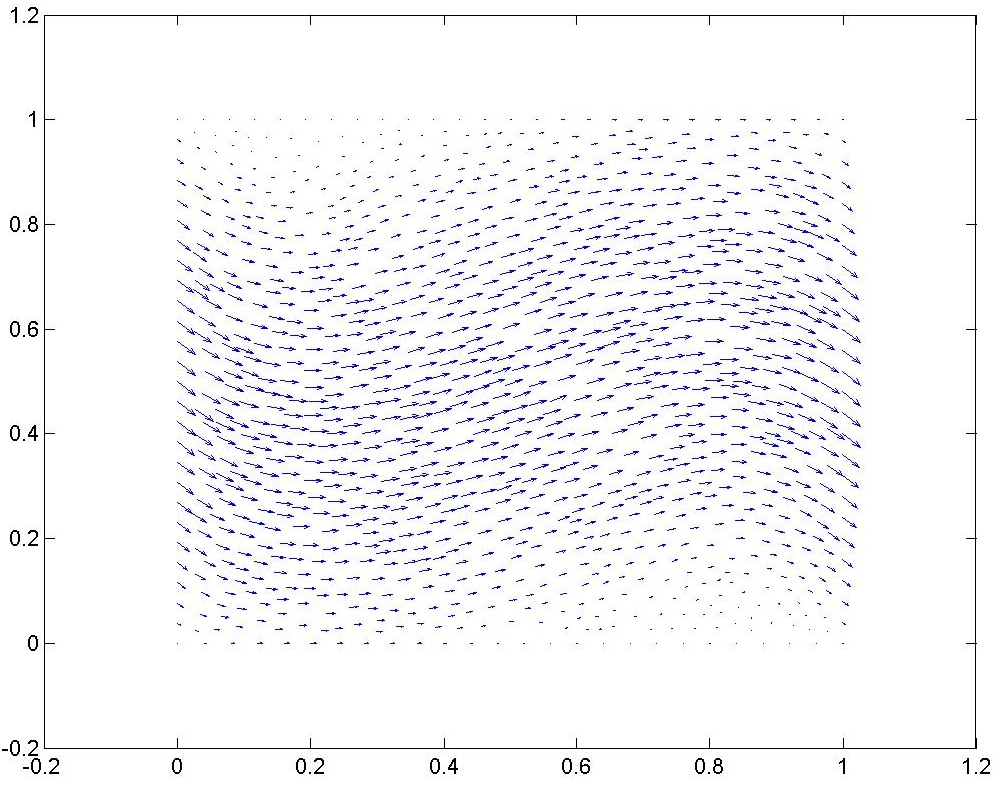}}
	\caption{{Fluid flow with boundary condition of Tresca friction type}}
	\label{ligne_courant}
\end{figure}

\begin{figure}[H]
\hspace*{\fill}
	\centering{\includegraphics[scale=0.15]{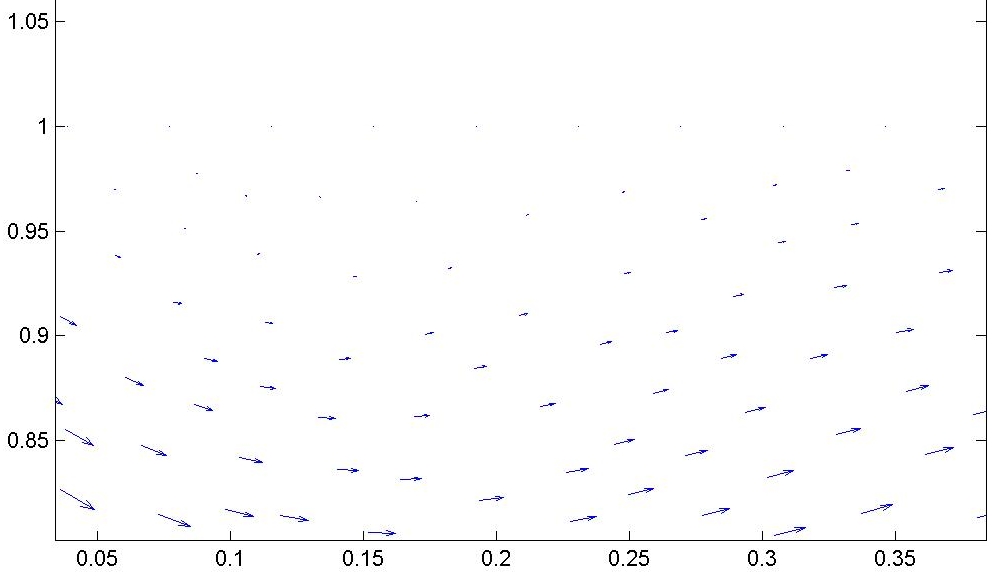}}
\hspace*{\fill}
		\centering{\includegraphics[scale=0.15]{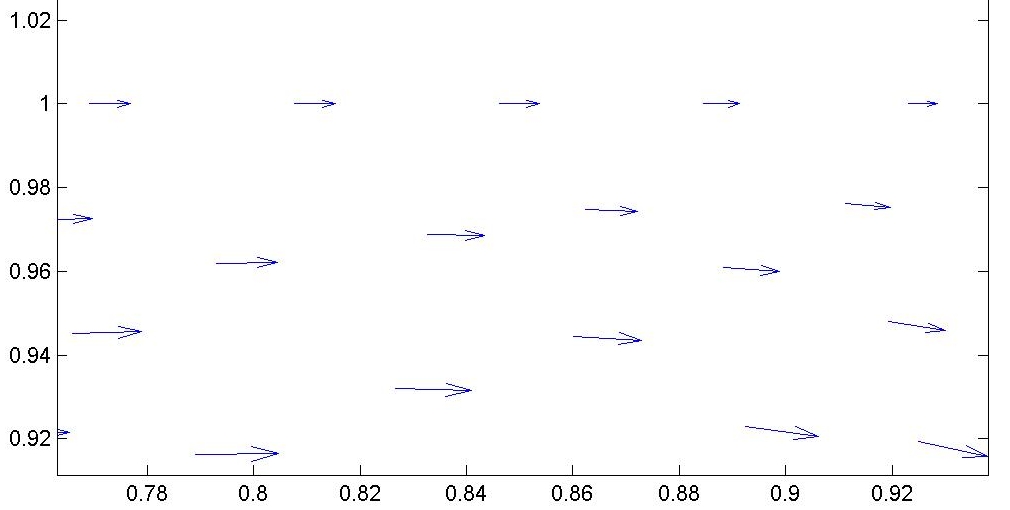}}
\hspace*{\fill}
		\caption{{Zoom of snon-slip and slip zones}}
\label{zoom_adh_gliss}
\end{figure}
}

Since an explicit solution to such a problem is not available, we calculate the discrete solution with sufficiently refined mesh, $h=\frac{1}{2000}$, which is taken as the reference solution; next we compute ${\bf u}_h$, the approximate solution, for different mesh sizes $h$ and we compare them to the reference solution.

\begin{table}[H]
\begin{center}
\begin{tabular}{|c|cc||cc||cc|}
\hline
$h$ & $||{\bf u}-{\bf u}_h||_0$ & $\alpha_0$ & $||{\bf u}-{\bf u}_h||_1$ & $\alpha_1$ & $||p-p_h||_0$ & $\alpha_p$\\
\hline
1.4286e-03   &3.5378e-04   &1.213   &8.8945e-03   &0.720   &2.0119e-01   &0.244\\
1.2500e-03   &2.7779e-04   &1.225   &7.6057e-03   &0.729   &1.7985e-01   &0.256\\
1.1111e-03   &2.2973e-04   &1.231   &6.8531e-03   &0.732   &1.6176e-01   &0.267\\
1.e-03       &1.8160e-04   &1.247   &5.9045e-03   &0.742   &1.4513e-01   &0.279\\
9.0909e-04   &1.5163e-04   &1.255   &5.3128e-03   &0.747   &1.3040e-01   &0.290\\
8.3333e-04   &1.2746e-04   &1.264   &5.0581e-03   &0.745   &1.1660e-01   &0.303\\
7.6923e-04   &1.0930e-04   &1.272   &4.6551e-03   &0.748   &1.0512e-01   &0.314\\
7.1429e-04   &9.5356e-05   &1.278   &4.3974e-03   &0.749   &6.0087e-02   &0.388\\
6.6667e-04   &8.5856e-05   &1.280   &4.6691e-03   &0.733   &8.9121e-02   &0.330\\
6.2500e-04   &7.3662e-05   &1.289   &3.7694e-03   &0.756   &3.9288e-02   &0.438\\
5.8824e-04   &6.5507e-05   &1.295   &4.1820e-03   &0.736   &5.8219e-02   &0.382\\
5.5556e-04   &5.8187e-05   &1.301   &3.6802e-03   &0.747   &4.9130e-02   &0.402\\
\hline
\end{tabular}
\end{center}
\caption{Convergence rates with respect to $h$}
\label{tab:erreur}
\end{table}

Table \ref{tab:erreur} provides the variation of $||{\bf u}-{\bf u}_h||_0$, $||{\bf u}-{\bf u}_h||_1$ and $||p-p_h||_0$ with respect to the mesh size respectively. The first remark one can make is the rate convergence of $H^1$-norm of error on ${\bf u}$ is equal to $\displaystyle \frac{3}{4}$ which is in agreement with theoretical result. The second one is that in spite of concidering very small mesh size, $\displaystyle h=\frac{1}{1800}$, we cannot conclude about rate convergence of ${\bf u}$ and $p$ error $L^2$-norms.
%
\section{Conclusion}
%
A three field mixed formulation of the stokes problem with Tresca boundary condition has been introduced and studied. The convergence analysis and a priori error estimates of the discrete corresponding problem have been established. In particular, we show an optimal error estimate of order $h^{\frac{3}{4}}$ for the velocity when it is approximated by classical {\it P1 bubble} finite element. A numerical realisation of a model example have been proposed wich confirms the theoritical result.
%

\end{document}